\providecommand\given{}
\newcommand\SetSymbol[1][]{%
  \nonscript\:#1\vert
  \allowbreak
  \nonscript\:
  \mathopen{}}
\DeclarePairedDelimiterX\Set[1]{\{}{\}}{%
  \renewcommand\given{\SetSymbol[\delimsize]}
  #1}
\DeclarePairedDelimiterXPP\pospart[1]{}{(}{)}{^+}{#1}
\DeclarePairedDelimiterXPP\negpart[1]{}{(}{)}{^-}{#1}
\newcommand\m{\phantom{-}}
\newcommand\R{\mathbb{R}}
\newcommand\UU{\mathcal{U}}
\newcommand\VV{\mathcal{V}}
\newcommand\GG{\mathcal{G}}
\newcommand\ones{\mathbbm{1}}
\newtheorem{thm}{Theorem}[section]
\newtheorem{prop}[thm]{Proposition}
\newtheorem{lem}[thm]{Lemma}
\newtheorem{cor}[thm]{Corollary}
\theoremstyle{definition}
\newtheorem{ex}[thm]{Example}
\DeclareMathOperator*{\argmax}{arg\,max}
\DeclareMathOperator*{\vertices}{vert}
\DeclareMathOperator*{\Int}{int}
\DeclareMathOperator*{\ri}{ri}
\DeclareMathOperator*{\rank}{rank}
\DeclareMathOperator*{\cl}{cl}
\DeclareMathOperator*{\epi}{epi}
\DeclareMathOperator*{\conv}{conv}
\DeclareMathOperator*{\cone}{cone}
\DeclareMathOperator*{\ran}{ran}
\DeclareMathOperator*{\br}{best\,resp}
\DeclareMathOperator*{\Nash}{Nash}
\DeclareMathOperator*{\extNash}{extNash}
\DeclareMathOperator*{\NashFaces}{NashFaces}
\DeclareMathOperator*{\maxNashFaces}{maxNashFaces}
\newcommand{\leqnomode}{\tagsleft@true\let\veqno\@@leqno}
\newcommand{\reqnomode}{\tagsleft@false\let\veqno\@@eqno}
\title{Computing all Nash equilibria of low-rank bi-matrix games}
\author{Zachary Feinstein\thanks{Stevens Institute of Technology, School of Business, Hoboken, NJ 07030, USA, zfeinste@stevens.edu} \and Andreas L{\"o}hne\thanks{Friedrich Schiller University Jena, Faculty of Mathematics and Computer Science, 07737 Jena, Germany, andreas.loehne@uni-jena.de} \and Birgit Rudloff\thanks{Vienna University of Economics and Business, Institute for Statistics and Mathematics, Vienna A-1020, AUT, brudloff@wu.ac.at}}
\begin{document}
\maketitle

\begin{abstract} \noindent  
We study constrained bi-matrix games, with a particular focus on low-rank games. Our main contribution is a framework that reduces low-rank games to smaller, equivalent constrained games, along with a necessary and sufficient condition for when such reductions exist. Building on this framework, we present three approaches for computing the set of extremal Nash equilibria, based on vertex enumeration, polyhedral calculus, and vector linear programming. Numerical case studies demonstrate the effectiveness of the proposed reduction and solution methods.
\end{abstract}
\medskip
\noindent
{\bf Keywords:} game theory, polytope game, rank of game, multi-objective linear programming, vector linear programming
\medskip

\section{Introduction}

Bi-matrix games were first proposed by John von Neumann for the two-player zero-sum setting \cite{neumann1928theorie}. For such games, there exists an equilibrium point that can be computed via linear programming via a minimax formulation.
A generalization of this equilibrium notion was later developed by John Nash (\cite{nash1950,nash1951}). This notion, called the Nash equilibrium, is any stable strategy set so that no player would choose to unilaterally alter his or her actions.
Within this work, we will focus on two-player bi-matrix games with constraints in a non-zero-sum setting (though we treat the zero-sum game as a special case).

In recent years, algorithms have been developed to find all extremal Nash equilibria for bi-matrix games. We refer the interested reader to, e.g., \cite{vStengel21}. 
Tools like \texttt{lrsnash}~\cite{Avis2000,Avis2006,Avis2009}, perform very well in practice and can handle relatively large game matrices. Nevertheless, the curse of dimensionality remains inherent, as the size of the strategy sets increases.

Within this work, we consider bi-matrix games that have low rank so as to reduce dimensionality. The rank of bi-matrix games were first considered in \cite{kannan2010games}. Specifically, in that work, the rank of the game is defined by the rank of the sum of the payoff matrices; in that way, e.g., a zero-sum game has 0 rank. 

In contrast to prior works, we approach the low-rank game problem by reducing the dimension of the game through a novel matrix factorization. We provide a necessary and sufficient condition for the existence of such a factorization and show that the original bi-matrix game is fully characterized by the (typically smaller) reduced game.
In doing such a reduction, we reformulate the original bi-matrix game as a constrained bi-matrix game, which can be seen as a special case of polytope games \cite{bhattacharjee2000polytope}.

However, in making this reformulation, the aforementioned methodologies no longer apply as they rely on the, typical, simplex constraints on player strategies which are replaced by polytope constraints.
Although our primary motivation comes from low-rank games, we present three methods for identifying extremal Nash equilibria in constrained bi-matrix games. We view these methods as a proof of concept rather than a fully developed framework: First, we show that vertex enumeration remains a viable approach for computing extremal Nash equilibria in constrained bi-matrix games, suggesting that tools such as \texttt{lrsnash} can likely be generalized to this setting. Second, we show that the problem can be decomposed into a few basic operations on convex polyhedra, enabling straightforward implementation using polyhedral calculus tools such as \textit{bensolve tools} \cite{bt, CirLoeWei19}, which is based on the \textit{bensolve} solver \cite{bensolve,LoeWei17} for vector linear programs (VLP).
 A third approach demonstrates that a vector linear program solver can be applied directly. We formulate two VLPs and show that the extremal Nash equilibrium points can be obtained from their solutions.


In fact, constraints on player actions arise naturally in applications such as \cite{ConstraintGames14}. 
As highlighted in \cite{MenZha14}, the constraints can be remapped into a higher-dimensional bi-matrix game with standard simplex constraints. However, this approach, in which the constraints are remapped, can be problematic, as it may lead to a dramatic increase in both the matrix size and the rank of the game.
 As such, while theoretically valid, such an approach is not practical for most cases.

The remainder of this paper is organized as follows. In Section~\ref{sec:notation}, we introduce the notation used throughout the paper. Section~\ref{sec:main} presents the fundamentals of constrained bi-matrix games. Section~\ref{sec:lowrank} focuses on low-rank games and their reduction into typically smaller constrained games. In Section~\ref{sec_methods}, we discuss three approaches for solving constrained bi-matrix games. Finally, Section~\ref{sec:numeric} presents numerical case studies to demonstrate the efficacy of our approach.

\section{Notation and preliminaries}\label{sec:notation}

A set \( P \subseteq \mathbb{R}^n \) is called a \emph{polyhedral convex set} or a \emph{convex polyhedron} if it can be described as
\begin{equation} \label{hrep0}
	P = \{ x \in \mathbb{R}^n \mid A x \leq b \},
\end{equation}
for some matrix \( A \in \mathbb{R}^{m \times n} \) and vector \( b \in \mathbb{R}^m \). This form is referred to as an \emph{H-representation} of \( P \). The \textit{recession cone} of $P$, denoted $0^+ P$, is obtained if the vector $b$ in \eqref{hrep0} is replaced by the zero vector.

Alternatively, a convex polyhedron \( P \) can be written as the Minkowski sum of the convex hull of finitely many points \( v^1, \dots, v^k \in \mathbb{R}^n \) and the conic hull of finitely many directions \( d^1, \dots, d^l \in \mathbb{R}^n \), that is,
\begin{equation} \label{vrep0}
	P = \conv\{v^1, \dots, v^k\} + \cone\{d^1, \dots, d^l\}.
\end{equation}
Such a representation is called a \emph{V-representation}. We use the convention \( \cone \emptyset = \{0\} \) to represent polytopes without any directions. 

A point \( v \in P \) is called a \emph{vertex} of \( P \) if it cannot be written as a strict convex combination of two distinct points in \( P \). The set of all vertices of \( P \) is denoted by \( \vertices P \). A nonzero direction \( d \in 0^+ P \) is called an \emph{extremal direction} if for all \( u, w \in 0^+ P \) with \( d = u + w \), it follows that \( u, w \in \cone\{d\} \). 

A \emph{P-representation} (short for \emph{projection representation}) of a convex polyhedron \( P \subseteq \mathbb{R}^n \) is an expression of the form
\begin{equation} \label{prep}
	P = \{ x \in \mathbb{R}^n \mid \exists y \in \mathbb{R}^k:\; A x + B y \leq b \},
\end{equation}
for matrices \( A \in \mathbb{R}^{m \times n} \), \( B \in \mathbb{R}^{m \times k} \), and vector \( b \in \mathbb{R}^m \). We can apply Fourier–Motzkin elimination (see, e.g., \cite{lauritzen}) to eliminate the variables \( x_1, \dots, x_n \). This yields an H-representation of \( P \), confirming that \eqref{prep} defines a convex polyhedron.
Clearly, H-representations are a special case of P-representations. Moreover, the V-representation in \eqref{vrep0} can also be viewed as a special case of a P-representation. If \( V \) is the matrix whose columns are \( v^1, \dots, v^k \) and \( D \) has columns \( d^1, \dots, d^l \), then
\[
P = \left\{ x \in \mathbb{R}^n \;\middle|\; \exists (\lambda, \mu) \in \mathbb{R}^q \times \mathbb{R}^r: V \lambda + D \mu = x,\; \lambda \geq 0,\; e^\top \lambda = 1,\; \mu \geq 0 \right\},
\]
where \( e^\top = (1, \dots, 1) \).

A \emph{proper face} of a convex polyhedron \( P \subseteq \mathbb{R}^n \) is the intersection of \( P \) with a supporting hyperplane. The empty set and $P$ itself are improper faces of $P$. A \emph{facet} of \( P \) is a face of dimension \( \dim P - 1 \), i.e., a maximal proper face.

A \textit{relative interior} point of a convex set $S$ is a point that is interior relative to the affine hull of $S$. The set of all relative interior points of $S$ is denoted by $\ri S$.

A matrix $M \in \R^{m \times n}$ is identified with a linear map $M:\R^n \to \R^m$, i.e., we have $Mx = M(x)$. For a subset $S \subseteq \R^n$, the set
$$ M[S] \coloneqq \Set{Mx |\; x \in S}$$
is called the \textit{image} of $S$ under the linear map $M$. For a set $S' \subseteq \R^m$, the set
$$ M^{-1}[S']\coloneqq \Set{x \in \R^n |\; Mx \in S'}$$
is called the \textit{inverse image} of $S'$ under the linear map $M$. The \textit{range} of a matrix $M \in \R^{m \times n}$ is the linear subspace $\ran M \coloneqq M[\R^n]$ of $\R^m$, while the \textit{kernel} of $M$ is the linear subspace $\ker M \coloneqq M^{-1}(\{0\})$ of $\R^n$.

The following result is elementary; for completeness and convenience, we include a short proof.
\begin{prop}\label{prop:im_inv}
 Let $S \subseteq \R^n$, $S' \subseteq \R^m$ and $M\in \R^{n\times m}$. Then
 $$ M^{-1}[M[S]]= S + \ker M \supseteq S,$$
 $$ M[M^{-1}[S']]= S' \cap \ran M \subseteq S'.$$
\end{prop}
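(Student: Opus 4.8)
The plan is to prove the two set identities by establishing inclusions in both directions, handling each of the four inclusions by a direct element-chase. Note that $M \in \R^{n \times m}$ here, so $M \colon \R^m \to \R^n$; thus $S \subseteq \R^n$ sits in the codomain for the first identity (with $M[S]$ not appearing — wait, it does: $S \subseteq \R^n$ must actually be read as $S \subseteq \R^m$ for $M[S]$ to make sense). I would first silently reconcile the index conventions: for $M[S]$ to be defined we need $S \subseteq \R^m$ (the domain), and for $M^{-1}[S']$ we need $S' \subseteq \R^n$ (the codomain); I will state the identities as $M^{-1}[M[S]] = S + \ker M$ for $S$ in the domain, and $M[M^{-1}[S']] = S' \cap \ran M$ for $S'$ in the codomain, which is the only consistent reading.

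For the first identity, I would argue: if $x \in M^{-1}[M[S]]$ then $Mx \in M[S]$, so $Mx = Ms$ for some $s \in S$, hence $M(x-s) = 0$, i.e. $x - s \in \ker M$, giving $x = s + (x-s) \in S + \ker M$. Conversely, if $x = s + z$ with $s \in S$ and $z \in \ker M$, then $Mx = Ms + Mz = Ms \in M[S]$, so $x \in M^{-1}[M[S]]$. The inclusion $S + \ker M \supseteq S$ is immediate since $0 \in \ker M$. For the second identity: if $y \in M[M^{-1}[S']]$ then $y = Mx$ for some $x$ with $Mx \in S'$, so $y \in S'$, and trivially $y = Mx \in \ran M$, hence $y \in S' \cap \ran M$. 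Conversely, if $y \in S' \cap \ran M$, pick $x$ with $Mx = y$ (possible since $y \in \ran M$); then $x \in M^{-1}[S']$ because $Mx = y \in S'$, so $y = Mx \in M[M^{-1}[S']]$. The inclusion $S' \cap \ran M \subseteq S'$ is trivial.

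There is no real obstacle here; the only point requiring care is the bookkeeping of which space each set lives in, i.e. making sure the domain/codomain roles of $M$ are consistent with the stated formulas (the statement's "$S \subseteq \R^n$, $S' \subseteq \R^m$, $M \in \R^{n \times m}$" must be read with $M$ acting from the space containing $S'$-related preimages appropriately). Once that is fixed, each of the four inclusions is a one-line application of the definitions of image, inverse image, kernel, and range, using only that $M$ is linear (specifically $M(x-s) = Mx - Ms$ and $M0 = 0$).
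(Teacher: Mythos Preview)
Your proposal is correct and follows essentially the same approach as the paper: a direct element-chase using the definitions of image, inverse image, kernel, and range. The paper presents each identity as a single chain of set equalities rather than two separate inclusions, but the underlying steps are identical to yours; you also correctly identify (and resolve) the typo in the dimensions of $S$ and $S'$ in the statement.
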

\begin{proof}
We compute:
\begin{align*}
    M^{-1}[M[S]] &= \Set{x \given  M x \in M[S]} \\
                 &= \Set{x \given  \exists z \in S:\; M x = M z} \\
                 &= \Set{x \given  \exists z \in S:\; x-z \in \ker M} \\
                 &= \ker M + S.
\end{align*}
Similarly,
\begin{align*}
    M[M^{-1}[S']] &= \Set{M z \given z \in M^{-1}[S']} \\
                  &= \Set{y   \given  \exists  z \in M^{-1}[S']:\; y = M z } \\
                  &= \Set{y   \given  \exists  z:\; M z \in S', \; y = M z } \\
                  &= \Set{y   \given  \exists  z:\; y \in S',\; y = M z} \\
                  &= S' \cap \ran M
\end{align*}
completes the proof.
\end{proof}

\section{Constrained bi-matrix games} \label{sec:main}

Let $(A,B)$ be a bi-matrix game given by the payoff matrices $A, B \in \R^{m \times n}$. Player 1 chooses a row index $i \in [m]$ randomly based on a discrete probability distributions $x \in S \coloneqq \Set{x \in \R^m_+ \given \ones^{\top} x = 1}$. A vector $x \in S$ is called a {\em (mixed) strategy} of player 1. Likewise, player 2 chooses a column index $j \in [n]$ randomly with $y \in T\coloneqq \Set{y \in \R^n_+ \given \ones^{\top} y = 1}$. A vector $y \in T$ is called a {\em (mixed) strategy} of player 2. The expected payoff for player 1 is $y^{\top} A^{\top} x$. For player 2 it is $x^{\top} B y$. Each player tries to maximize her or his expected payoff. The resulting bi-linear payoff function is denoted
$$ p : \R^m \times \R^n \to \R \times \R, \qquad p(x,y) \coloneqq \begin{pmatrix}
	 y^{\top} A^{\top} x \\ x^{\top} B y
\end{pmatrix}\text{.}$$

A broader framework within bi-matrix games, termed the {\em constrained bi-matrix game}, is obtained by permitting the sets $S$ and $T$ to be arbitrary convex polytopes in $\R^m$ and $\R^n$ respectively. 

Let $\GG = (A,B;S,T)$ be a constrained bi-matrix game. For a strategy $y \in T$ of player 2, the best response of player 1 is the set of strategies
$$ \br (y) \coloneqq \argmax_{x \in S} y^{\top} A^{\top} x.$$
Likewise we define
$$ \br (x) \coloneqq \argmax_{y \in T} x^{\top} B y.$$
A tuple $(x,y) \in S \times T$ is called a {\em Nash equilibrium} of $\GG$ provided
\begin{equation}\label{eq:def_nash}
	x \in \br(y) \quad\text{and}\quad y\in \br(x).
\end{equation}
The set of all Nash equilibria of $\GG$ is denoted by $\Nash(\GG)$. 

\begin{prop}
	Every constrained bi-matrix game $\GG = (A,B;S,T)$ has at least one Nash-equilibrium if $S$ and $T$ are compact sets.
\end{prop}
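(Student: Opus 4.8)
The plan is to apply Kakutani's fixed point theorem to the best-response correspondence. Define the set-valued map $F : S \times T \rightrightarrows S \times T$ by $F(x,y) \coloneqq \br(y) \times \br(x)$, so that a fixed point of $F$ is precisely a Nash equilibrium in the sense of \eqref{eq:def_nash}. Since $S$ and $T$ are compact convex polytopes, $S \times T$ is a nonempty compact convex subset of $\R^m \times \R^n$, and it remains to verify the hypotheses of Kakutani's theorem for $F$.

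First I would check that $F(x,y)$ is nonempty and convex for every $(x,y) \in S \times T$. Nonemptiness follows because $x \mapsto y^\top A^\top x$ is a continuous (linear) function on the nonempty compact set $S$, so $\argmax_{x \in S} y^\top A^\top x \neq \emptyset$, and symmetrically for $\br(x)$; convexity follows because the set of maximizers of a linear function over a convex set is convex (it is a face of $S$, respectively $T$). Second I would verify that $F$ has closed graph: if $(x^k, y^k) \to (\bar x, \bar y)$ and $(u^k, w^k) \in F(x^k, y^k)$ with $(u^k, w^k) \to (\bar u, \bar w)$, then for every $x \in S$ we have $(y^k)^\top A^\top u^k \ge (y^k)^\top A^\top x$, and passing to the limit (using joint continuity of the bilinear form) gives $\bar y^\top A^\top \bar u \ge \bar y^\top A^\top x$, so $\bar u \in \br(\bar y)$; the argument for $\bar w \in \br(\bar x)$ is identical. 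Hence the graph of $F$ is closed.

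With these facts in hand, Kakutani's fixed point theorem yields a point $(x^*, y^*) \in S \times T$ with $(x^*, y^*) \in F(x^*, y^*)$, i.e., $x^* \in \br(y^*)$ and $y^* \in \br(x^*)$, which is the desired Nash equilibrium.

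The only genuine subtlety is the closed-graph (upper hemicontinuity) verification; the nonemptiness and convexity of the values are immediate from linearity of the payoffs and compactness of $S$ and $T$. Since $S$ and $T$ are polytopes, one could alternatively argue via continuity of the support function, but the sequential closed-graph argument above is the cleanest and generalizes without change beyond the polytope setting.
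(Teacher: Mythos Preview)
Your proof is correct and follows essentially the same approach as the paper: both apply Kakutani's fixed point theorem to the best-response correspondence $(x,y)\mapsto \br(y)\times\br(x)$ on the compact convex set $S\times T$. The paper's proof is only a one-sentence reference to Nash's original argument, whereas you spell out the verification of nonempty convex values and closed graph, but the underlying idea is identical.
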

\begin{proof}
	Like in the original proof by Nash \cite{Nash50} we can apply Kakutani's fixed point theorem \cite{Kakutani41} to the set-valued mapping $(x,y) \mapsto \br(y) \times \br(x)$. In particular, this requires compactness of the sets $S$ and $T$.
\end{proof}
The best response $\br (y)$ of player 1 is the set of optimal solutions of the linear program
\leqnomode
\begin{gather}\label{p1}\tag{P$_1(y)$}
  \max y^{\top} A^{\top} x \;\text{ s.t. } x \in S\text{.}
\end{gather}
\reqnomode
Likewise, the best response $\br (x)$ of player 2 is the set of optimal solutions of the linear program
\leqnomode
\begin{gather}\label{p2}\tag{P$_2(x)$}
  \max x^{\top} B y \;\text{ s.t. } y \in T\text{.}
\end{gather}
\reqnomode
We now define the following \textit{optimal value functions} for these linear programs:
$$ \eta : \R^n \to  \R \cup \Set{\infty},\; \eta(y) \coloneqq \left\{ \begin{array}{cl}
    \text{optimal value of \eqref{p1}} & \text{ if } y \in T  \\
    \infty & \text{ otherwise} 
\end{array} \right.\text{,}$$
$$ \xi : \R^m \to  \R \cup \Set{\infty},\; \xi(x) \coloneqq \left\{ \begin{array}{cl}
    \text{optimal value of \eqref{p2}} & \text{ if } x \in S  \\
    \infty & \text{ otherwise} 
\end{array} \right.\text{.}$$

\begin{prop}\label{prop:3a}
    The optimal value functions $\eta$ and $\xi$ are polyhedral convex, i.e., their epigraphs are convex polyhedra. 
\end{prop}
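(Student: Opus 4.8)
The plan is to prove the claim for $\eta$; the argument for $\xi$ is verbatim the same with the two players interchanged. The starting point is the fundamental theorem of linear programming: since $S$ is a polytope it is the convex hull of its finitely many vertices, say $\vertices S = \{x^1,\dots,x^k\}$, and for every linear objective the maximum over $S$ is attained at one of these vertices. Hence, using that $y^\top A^\top x^i$ is a scalar and equals $(A^\top x^i)^\top y$, we get for every $y \in T$
$$
\eta(y) \;=\; \max_{x \in S}\, y^\top A^\top x \;=\; \max_{i \in [k]}\, (A^\top x^i)^\top y ,
$$
so on $T$ the function $\eta$ is the pointwise maximum of the $k$ linear functions $y \mapsto (A^\top x^i)^\top y$.

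Next I would write down the epigraph explicitly. By definition $\eta(y) = \infty$ for $y \notin T$, so such points contribute nothing to $\epi \eta$; and for $y \in T$ the pair $(y,t)$ lies in $\epi\eta$ if and only if $t \ge (A^\top x^i)^\top y$ for every $i \in [k]$. Fixing an H-representation $T = \Set{y \in \R^n \given C y \le d}$, which exists because $T$ is a polyhedron, this yields
$$
\epi \eta \;=\; \Set{(y,t) \in \R^n \times \R \given C y \le d,\ \ (A^\top x^i)^\top y - t \le 0 \ \ (i \in [k])},
$$
the solution set of finitely many linear inequalities in $(y,t)$, hence a convex polyhedron. Therefore $\eta$ is polyhedral convex (and in particular convex on all of $\R^n$, a pointwise maximum of convex functions being convex, with the $+\infty$ outside a convex set preserving convexity). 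Replacing $S$ by $T$ — i.e. using the vertices $y^1,\dots,y^\ell$ of $T$ and an H-representation of $S$ — gives $\xi(x) = \max_{j \in [\ell]} (B y^j)^\top x$ for $x \in S$ and the analogous polyhedral epigraph.

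I do not expect a genuine obstacle here. The one point that must be invoked rather than derived is the fundamental theorem of linear programming (a linear function attains its maximum over a polytope at a vertex), which relies on $S$ and $T$ being bounded; this is exactly the standing assumption that they are polytopes, and it is also what makes the "$+\infty$ outside $T$" convention in the definition of $\eta$ harmless, since the inner LP is always finite and solvable for $y \in T$. An alternative that avoids vertex enumeration is to pass to the LP dual: writing $S = \Set{x \given A_S x \le b_S}$, strong duality gives $\eta(y) = \min\Set{b_S^\top u \given u \ge 0,\ A_S^\top u = A^\top y}$ for $y \in T$, which exhibits $\epi\eta$ (intersected with $T\times\R$, then unioned appropriately) as a projection of an explicit polyhedron in the variables $(y,u,t)$; Fourier--Motzkin elimination of $u$, as recalled in Section~\ref{sec:notation}, again shows $\epi\eta$ is a convex polyhedron.
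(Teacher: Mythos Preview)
Your proof is correct. The paper takes a different but equally valid route: instead of enumerating the vertices of $S$ and obtaining an H-representation of $\epi\eta$ directly, it writes down the LP dual of \eqref{p1} from an H-representation $S=\{x\mid \bar S x\le \bar s\}$ and uses strong duality to obtain the P-representation
\[
\epi\eta=\Set{(y,r)\given \exists u\ge 0:\;\bar S^{\top} u=Ay,\; r\ge \bar s^{\top} u},
\]
which is exactly the alternative you sketch at the end. Your primal/vertex argument is more elementary and yields an H-representation in one step, at the cost of having the vertices of $S$ in hand; the paper's dual argument avoids vertex enumeration and produces a P-representation, which is precisely the form exploited later in Section~\ref{sec_methods} for the polyhedral-calculus and VLP methods. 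Both arguments rely on $S$ being bounded (for you, to guarantee finitely many vertices and attainment; for the paper, to guarantee strong duality with a feasible dual), which is the standing polytope hypothesis. It is worth noting that the vertex-enumeration viewpoint you chose resurfaces explicitly in the first method of Section~\ref{sec_methods}.
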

\begin{proof}
    Since $S$ is a polyhedron in $\R^m$, there are a matrix $\bar S \in \R^{k \times m}$ and a vector $\bar s \in \R^k$ such that
    $$S = \Set{x \in \R^m \given \bar S x \leq \bar s}.$$
Then the dual linear program of \eqref{p1} is  
\leqnomode
\begin{gather}\label{ld1}\tag{D$_1(y)$}
  \min \bar s^{\top} u \;\text{ s.t. } \bar S^{\top} u = A y, \; u \geq 0\text{.}
\end{gather}
\reqnomode
Since $S$ is assumed to be nonempty and bounded, linear programming duality yields a P-representation of the epigraph of $\eta$:
$$ \epi \eta = \Set{(y,r) \given \exists u \in \R^k: \bar S^{\top} u = A y, \; u \geq 0,\; r \geq \bar s^{\top} u}.$$
This shows that $\epi \eta$ is a convex polyhedron. The remaining part of the proof is analogous.
\end{proof}
The following characterization of Nash equilibrium points follows directly from the definitions.
\begin{prop}\label{prop:4a}
A point $(x,y) \in \R^m \times \R^n$ is a Nash equilibrium point for $\GG$ if and only if $(x,y) \in S \times T$, and
\begin{equation}\label{eq:41a}
	y^{\top} A^{\top} x = \eta(y) \quad \wedge \quad x^{\top} B y = \xi(x)\text{.}
\end{equation}
\end{prop}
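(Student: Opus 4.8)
The plan is to unwind the definition of a Nash equilibrium (Equation~\eqref{eq:def_nash}) into the statement that $x$ and $y$ are optimal solutions of the linear programs \eqref{p1} and \eqref{p2}, and then to translate ``optimal solution'' into ``feasible point attaining the optimal value'', which is exactly what \eqref{eq:41a} records once one recalls the definitions of $\eta$ and $\xi$. Since $S$ and $T$ are nonempty polytopes, hence compact, the linear program \eqref{p1} has an optimal solution whenever $y \in T$ and \eqref{p2} has one whenever $x \in S$; in particular $\eta(y) \in \R$ for $y \in T$ and $\xi(x) \in \R$ for $x \in S$, so both equalities in \eqref{eq:41a} are between real numbers. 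I would record this remark first so that everything that follows is well-posed.

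For the forward implication, assume $(x,y) \in \Nash(\GG)$. Then $(x,y) \in S \times T$ and $x \in \br(y)$, $y \in \br(x)$ directly from \eqref{eq:def_nash}. By definition $\br(y)$ is the optimal solution set of \eqref{p1}; thus $x \in \br(y)$ means $x$ is feasible for \eqref{p1} --- which holds since $x \in S$ --- and $y^{\top} A^{\top} x$ equals the optimal value of \eqref{p1}. Because $y \in T$, that optimal value is by definition $\eta(y)$, so $y^{\top} A^{\top} x = \eta(y)$. The symmetric argument applied to $y \in \br(x)$ and \eqref{p2} gives $x^{\top} B y = \xi(x)$, which is \eqref{eq:41a}.

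For the converse, assume $(x,y) \in S \times T$ and that \eqref{eq:41a} holds. Since $y \in T$, $\eta(y)$ is the optimal value of \eqref{p1}; the point $x$ is feasible for \eqref{p1} (as $x \in S$) and attains this value, hence $x$ is an optimal solution, i.e.\ $x \in \br(y)$. Interchanging the roles of the two players yields $y \in \br(x)$ in the same way, and therefore $(x,y) \in \Nash(\GG)$ by \eqref{eq:def_nash}. I do not expect any genuine obstacle here: the entire content is the bookkeeping identity ``$z$ lies in the argmax of a linear objective over a nonempty compact polytope $\iff$ $z$ is feasible and its objective value equals the optimal value'', together with the care that the membership $(x,y) \in S \times T$ is kept explicit --- it cannot be dropped, since off $S \times T$ at least one of $\eta(y)$, $\xi(x)$ equals $+\infty$ while the corresponding bilinear term is finite, so the equivalence would be stated for a strictly larger set on one side.
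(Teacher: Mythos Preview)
Your proof is correct and matches the paper's treatment: the paper does not give an explicit proof at all, stating only that the characterization ``follows directly from the definitions,'' which is precisely the unwinding you carry out. Your extra care about finiteness of $\eta(y)$ and $\xi(x)$ on $S\times T$ is a welcome clarification but not strictly needed for the equivalence as stated.
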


The following proposition introduces a feature that is crucial for understanding the structure of the set of all Nash equilibria.

\begin{prop}\label{prop:34} 
    Let $(\bar x,\bar y) \in \Nash \GG$. Further let $F$ be a face of $\epi\xi$ with $(\bar x,\xi(\bar{x}))^{\top} \in \ri F$ and $G$ be a face of $\epi\eta$ with $(\bar y,\eta(\bar{y}))^{\top} \in \ri G$. Then, for all $(x,r)^{\top} \in F$ and all $(y,s)^{\top}\in G$ we have
    $$ r = \xi(x),\quad s = \eta(y),\quad (x,y) \in \Nash \GG.$$
\end{prop}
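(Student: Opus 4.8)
The plan is to deduce everything from two facts. The first is a reformulation of the Nash condition: by Proposition~\ref{prop:4a}, for any $(x,y)\in S\times T$ one has $\xi(x)\geq x^{\top}By$ and $\eta(y)\geq y^{\top}A^{\top}x=x^{\top}Ay$ (each value function is a maximum over the other player's polytope), hence $\xi(x)+\eta(y)\geq x^{\top}(A+B)y$, and $(x,y)\in\Nash\GG$ \emph{if and only if} equality holds. The second is the standard face-calculus fact that in a convex set every point lies in the relative interior of exactly one face, and any face containing a given point contains the unique face whose relative interior contains it; by Proposition~\ref{prop:3a} the epigraphs $\epi\xi$ and $\epi\eta$ are convex polyhedra, so this applies to them.

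The heart of the argument is a sub-lemma that I would prove once and use twice, with the two players interchanged: \emph{if $(\hat x,\hat y)\in\Nash\GG$ and $H$ is a face of $\epi\eta$ with $(\hat y,\eta(\hat y))^{\top}\in\ri H$, then every $(y,s)^{\top}\in H$ satisfies $s=\eta(y)$ and $(\hat x,y)\in\Nash\GG$.} To see this, consider the affine function $\ell(y):=\hat x^{\top}(A+B)y-\xi(\hat x)$. The inequalities above give $\eta(y)\geq\ell(y)$ for all $y\in T$, so $\epi\eta\subseteq\{(y,s):s\geq\ell(y)\}$, while $(\hat x,\hat y)\in\Nash\GG$ forces $\ell(\hat y)=\eta(\hat y)$; thus $\{s=\ell(y)\}$ is a supporting hyperplane and $E:=\{(y,s)^{\top}\in\epi\eta:s=\ell(y)\}$ is a face of $\epi\eta$ containing $(\hat y,\eta(\hat y))^{\top}$. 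By the face fact, $H\subseteq E$, so $s=\ell(y)$ on $H$; combined with $s\geq\eta(y)\geq\ell(y)$ this yields $s=\eta(y)=\ell(y)$, whence $\xi(\hat x)+\eta(y)=\hat x^{\top}(A+B)y$ and the Nash reformulation gives $(\hat x,y)\in\Nash\GG$. The mirror statement (for $\epi\xi$, with $A\leftrightarrow B$ and $x\leftrightarrow y$) is proved identically.

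To assemble the proof: apply the mirror sub-lemma to $\epi\xi$, the equilibrium $(\bar x,\bar y)$, and the face $F$; this gives $r=\xi(x)$ for every $(x,r)^{\top}\in F$ together with $(x,\bar y)\in\Nash\GG$. Now fix such an $x$. Since $(x,\bar y)\in\Nash\GG$ and, \emph{by hypothesis}, $(\bar y,\eta(\bar y))^{\top}\in\ri G$, the sub-lemma applies with $(\hat x,\hat y)=(x,\bar y)$ and $H=G$, giving $s=\eta(y)$ for every $(y,s)^{\top}\in G$ and $(x,y)\in\Nash\GG$. Collecting these statements proves all three conclusions for arbitrary $(x,r)^{\top}\in F$ and $(y,s)^{\top}\in G$.

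I expect the one place needing care — and the reason it is not enough to merely note that $(x,\bar y)$ and $(\bar x,y)$ are equilibria — is the last step, where the \emph{cross} pair $(x,y)$ is forced into $\Nash\GG$ by re-invoking the sub-lemma from the newly produced equilibrium $(x,\bar y)$; this is legitimate precisely because the hypothesis $(\bar y,\eta(\bar y))^{\top}\in\ri G$ refers to the fixed face $G$ and the fixed point $\bar y$, which are unaffected when $\bar x$ is replaced by $x$. Beyond this, the only subtlety is the correct use of the face-uniqueness fact; all the analytic content is the linear-programming duality already recorded in Propositions~\ref{prop:3a} and~\ref{prop:4a}.
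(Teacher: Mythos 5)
Your proof is correct, but it takes a genuinely different route from the paper's. The paper argues pointwise with the prolongation principle: for $(x,r)^{\top}\in F$ it produces $(\hat x,\hat r)^{\top}\in F$ with $\bar x=\lambda x+(1-\lambda)\hat x$, uses convexity of $\xi$ to force $r=\xi(x)$ (and similarly $s=\eta(y)$ on $G$), and then takes the four inequalities $x^{\top}By\leq\xi(x)$, $\hat x^{\top}By\leq\xi(\hat x)$, $x^{\top}B\hat y\leq\xi(x)$, $\hat x^{\top}B\hat y\leq\xi(\hat x)$, weights them by $\lambda\mu$, $(1-\lambda)\mu$, $\lambda(1-\mu)$, $(1-\lambda)(1-\mu)$ and sums, so that bilinearity plus the Nash equality $\bar x^{\top}B\bar y=\xi(\bar x)$ forces all of them to be equalities; Proposition~\ref{prop:4a} then closes the argument. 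You instead exploit the equivalent formulation $\xi(x)+\eta(y)\geq x^{\top}(A+B)y$ with equality exactly at Nash pairs, turn the affine minorant $y\mapsto\hat x^{\top}(A+B)y-\xi(\hat x)$ into a supporting hyperplane of $\epi\eta$, and use the standard fact that a face meeting the relative interior of another face contains it (Rockafellar, Theorem 18.1) to absorb $G$ into the exposed face where equality holds; the two-stage bootstrap (first $(x,\bar y)\in\Nash\GG$ from $F$, then $(x,y)\in\Nash\GG$ from $G$) is handled carefully and is valid because the hypothesis $(\bar y,\eta(\bar y))^{\top}\in\ri G$ is unaffected by replacing $\bar x$ with $x$. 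Your version buys a reusable sub-lemma that makes the exposed-face (``Nash set'') structure behind the proposition explicit and avoids the explicit convex-combination bookkeeping, at the cost of invoking the supporting-hyperplane/minimal-face machinery and the matrix $A+B$; the paper's version is more elementary, using only the prolongation principle, convexity of the value functions, and bilinearity. Both rest on Propositions~\ref{prop:3a} and~\ref{prop:4a}, and both are complete.
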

\begin{proof}
    Let $(x,r)^{\top} \in F$. Then the prolongation principle for relative interior points (see e.g., \cite[Proposition 1.3.3]{bertsekas}) yields a point $(\hat x,\hat r)^{\top} \in F$ such that $\bar x = \lambda x + (1-\lambda) \hat x$ and 
    $\xi(\bar x) = \lambda r + (1-\lambda) \hat r$ for some $\lambda \in (0,1)$.
Using the convexity of $\xi$, we get
$$
 \xi(\bar x) = \lambda r + (1-\lambda) \hat r \geq \lambda \xi(x) + (1-\lambda) \xi(\hat x) \geq \xi(\bar x).
$$
This implies 
$$
 \lambda r + (1-\lambda) \hat r = \lambda \xi(x) + (1-\lambda) \xi(\hat x).
$$
Since $r \geq \xi(x)$ and $\hat r \geq \xi(\hat x)$, we conclude $r = \xi(x)$.
    
    A similar statement holds for $(y,s)^{\top} \in G$ and we have
    $\bar y = \mu y + (1-\mu) \hat y$ and $\eta(\bar y) = \mu s + (1-\mu) \hat s$ for some $\mu \in (0,1)$, and $s = \eta(y)$.
  
    By the definition of $\xi$ we get the inequalities
    $x^{\top} B y \leq \xi(x)$,
    $\hat x^{\top} B y \leq \xi(\hat x)$,
    $x^{\top} B \hat y \leq \xi(x)$, and
    $\hat x^{\top} B \hat y \leq \xi(\hat x)$.
    Multiplying the inequalities, respectively, by $\lambda \mu$, $(1-\lambda)\mu$, $\lambda(1-\mu)$, $(1-\lambda)(1-\mu)$ and taking the sum yields $\bar x^{\top} B \bar y = \xi(\bar x)$, where the equality follows from Proposition \ref{prop:4a} and the assumption that $(\bar x, \bar y) \in \Nash \GG$. Therefore, the initial four inequalities must hold with equality, too. In particular, we obtain $x^{\top} B y = \xi(x)$. Analogously, we can show that
    $y^{\top} A^{\top} x = \eta(y)$. Now the claim follows from Proposition \ref{prop:4a}.
\end{proof}

As a consequence of the preceding proposition, we see that the set of all Nash equilibria is the union of (some of the) finitely many sets of the form \( \tilde F_i \times \tilde G_j \), where $\tilde F_i \coloneqq \Set{x\, |\; \exists r \in \R: (x,r)^{\top} \in F_i}$, $\tilde G_j \coloneqq \Set{y\, |\; \exists s \in \R: (y,s)^{\top} \in G_j}$,  \( F_i \) is a proper face of \( \operatorname{epi} \eta \), and \( G_j \) is a proper face of \( \operatorname{epi} \xi \). These sets are closely related to the \textit{Nash sets} introduced in \cite{millman74}.


A Nash equilibrium point $(x,y)$ of a constrained bi-matrix game $\GG$ is called {\em extremal} if the expression $(x,y,p(x,y))$ cannot be expressed as a proper convex combination of $(x^i,y^i,p(x^i,y^i))$ for finitely many Nash equilibrium points $(x^i,y^i)$, $i \in [k]$. The set of all extremal Nash equilibria is denoted by $\extNash(\GG)$. 
Our aim is to determine this set for any given bi-matrix game $\GG$. We start with a characterization in terms of vertices of the epigraphs of $\eta$ and $\xi$.
\begin{prop}\label{prop:6a}
    The following is equivalent:
    \begin{enumerate}[(i)]
        \item $(x,y) \in \extNash \GG$,
        \item $\begin{pmatrix}
       y \\ y^{\top} A^{\top} x
   \end{pmatrix} \in \vertices (\epi \eta)$ and $\begin{pmatrix}
       x \\ x^{\top} B y
   \end{pmatrix} \in \vertices (\epi \xi)$.
    \end{enumerate}
\end{prop}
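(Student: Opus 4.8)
The plan is to prove the two implications separately, using throughout that at a Nash equilibrium the bilinear payoff decouples: if $(x,y)\in\Nash\GG$, then $p(x,y)=(\eta(y),\xi(x))^\top$ by Proposition~\ref{prop:4a}, so $(y,y^\top A^\top x)=(y,\eta(y))\in\epi\eta$ and $(x,x^\top By)=(x,\xi(x))\in\epi\xi$. I would also invoke two elementary facts: (a) a vertex $v$ of a convex polyhedron cannot be written as $\sum_i\lambda_i v^i$ with $\lambda_i>0$, $\sum_i\lambda_i=1$ and polyhedron points $v^i$ unless all $v^i=v$ (immediate from the two-point definition by induction on the number of terms); and (b) a point of a polyhedron fails to be a vertex if and only if it lies in the relative interior of a face of dimension at least $1$.

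For (ii) $\Rightarrow$ (i) I would first observe that (ii) already forces $(x,y)\in\Nash\GG$: a vertex of an epigraph must lie on the graph (otherwise one could move vertically and stay in the epigraph), so (ii) gives $y^\top A^\top x=\eta(y)$ and $x^\top By=\xi(x)$, while $(y,\cdot)\in\epi\eta$ and $(x,\cdot)\in\epi\xi$ force $y\in T$ and $x\in S$; hence $x\in\br(y)$ and $y\in\br(x)$. Now suppose $(x,y,p(x,y))=\sum_{i\in[k]}\lambda_i\,(x^i,y^i,p(x^i,y^i))$ is a convex combination with all $\lambda_i>0$ and all $(x^i,y^i)\in\Nash\GG$. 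Since $p(x^i,y^i)=(\eta(y^i),\xi(x^i))^\top$, projecting this identity onto the coordinates carrying $y$ and the first payoff gives $(y,\eta(y))=\sum_i\lambda_i\,(y^i,\eta(y^i))$ with each summand in $\epi\eta$; by fact~(a) applied to the vertex $(y,\eta(y))$ we get $y^i=y$ for all $i$, and projecting onto the coordinates carrying $x$ and the second payoff gives $x^i=x$ for all $i$. Thus every term equals $(x,y,p(x,y))$, so $(x,y)$ is extremal.

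For (i) $\Rightarrow$ (ii), let $(x,y)\in\extNash\GG\subseteq\Nash\GG$, and let $F$ be the face of $\epi\xi$ with $(x,\xi(x))^\top\in\ri F$ and $G$ the face of $\epi\eta$ with $(y,\eta(y))^\top\in\ri G$ — the exact setting of Proposition~\ref{prop:34}. Suppose, for contradiction, that $(x,\xi(x))$ is not a vertex of $\epi\xi$; by fact~(b), $\dim F\ge1$, so the prolongation principle for relative interior points yields distinct $(x^1,r^1)^\top,(x^2,r^2)^\top\in F$ and $\nu\in(0,1)$ with $(x,\xi(x))^\top=\nu\,(x^1,r^1)^\top+(1-\nu)\,(x^2,r^2)^\top$. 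Applying Proposition~\ref{prop:34} with the point $(y,\eta(y))^\top\in G$ gives $r^j=\xi(x^j)$ and $(x^j,y)\in\Nash\GG$ for $j=1,2$, and $x^1\neq x^2$ (else $r^1=\xi(x^1)=\xi(x^2)=r^2$). Since each $(x^j,y)$ is a Nash equilibrium, $p(x^j,y)=(\eta(y),\xi(x^j))^\top=(\eta(y),r^j)^\top$, whence
\[
\nu\,\bigl(x^1,y,p(x^1,y)\bigr)+(1-\nu)\,\bigl(x^2,y,p(x^2,y)\bigr)=\bigl(x,\,y,\,\eta(y),\,\nu r^1+(1-\nu)r^2\bigr)=\bigl(x,y,p(x,y)\bigr),
\]
a proper convex combination of two distinct Nash-equilibrium triples, contradicting extremality. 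Hence $(x,\xi(x))$ is a vertex of $\epi\xi$; the symmetric argument — decomposing $(y,\eta(y))$ inside $G$ with $x$ held fixed and using $p(x,y^j)=(\eta(y^j),\xi(x))^\top$ — shows $(y,\eta(y))$ is a vertex of $\epi\eta$, which is (ii).

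I expect the (i) $\Rightarrow$ (ii) direction to be the main obstacle. The delicate point is that a decomposition of the point $(x,\xi(x))$ \emph{within its face} $F$ must be lifted to genuine Nash equilibria, which is exactly what Proposition~\ref{prop:34} delivers, and that along this lift the a priori bilinear payoff $p$ is in fact affine: this holds precisely because at a Nash equilibrium $y^\top A^\top x^j=\eta(y)$ does not depend on $j$ while $(x^j)^\top By=\xi(x^j)=r^j$ is linear in the decomposition parameter. Getting these dependencies straight (and keeping the symmetric $\xi\leftrightarrow\eta$, $x\leftrightarrow y$ roles correct) is the only real work; fact~(a) together with the observation that epigraph vertices sit on the graph makes the other direction routine.
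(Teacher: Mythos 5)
Your proof is correct and follows essentially the same route as the paper: for (i)~$\Rightarrow$~(ii) you place $(x,\xi(x))$ and $(y,\eta(y))$ in relative interiors of faces and use Proposition~\ref{prop:34} to turn a decomposition within the face into a proper convex combination of Nash triples, and for (ii)~$\Rightarrow$~(i) you project a hypothetical decomposition onto the epigraph coordinates and invoke the vertex property. The only difference is that you explicitly verify that (ii) forces $(x,y)\in\Nash\GG$ (vertices of epigraphs lie on the graph, and finiteness forces $x\in S$, $y\in T$), a step the paper's argument leaves implicit; this is a small but welcome tightening rather than a different approach.
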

\begin{proof}
   (i) $\Rightarrow$ (ii). Let $(x, y) \in \extNash \GG$. 
   By Proposition \ref{prop:4a}, we have $x^{\top} B y = \xi(x)$. Using, for instance, \cite[Theorem 18.2]{Rockafellar70}
   we see that, the point 
   $(x,x^{\top} B y)$ belongs to the relative interior of a proper face $F$ of $\epi \xi$. Without loss of generality, let the second part of (ii) be violated. Then $\dim F > 0$. Thus there are $x^1, x^2$ such that $(x,\xi(x))$ is a proper convex combination of $(x^1,\xi(x^1))$ and $(x^2,\xi(x^2))$. Consequently, $(x,y,p(x,y))$ is a proper convex combination of $(x^1,y,p(x^1,y))$ and $(x^2,y,p(x^2,y))$. By Proposition \ref{prop:34}, $(x^1,y),(x^2,y) \in \Nash \GG$, which contradicts (i).  

   (ii) $\Rightarrow$ (i).
   Assume (i) is violated while (ii) holds. Then there is a proper convex combination 
   $$ (x,y,p(x,y)) = \sum_{i=1}^\ell \lambda_i (x^i,y^i,p(x^i,y^i))$$
   where $(x^i,y^i) \in \Nash \GG$ for all $i \in [\ell]$.
   Using Proposition \ref{prop:4a} we obtain 
   \begin{equation}\label{eq:convcomb}
        x = \sum_{i=1}^\ell \lambda_i x^i \quad \text{ and } \quad
   y = \sum_{i=1}^\ell \lambda_i y^i
   \end{equation}
   $$ \xi(x) = \sum_{i=1}^\ell \lambda_i \xi(x^i) \quad \text{ and } \quad
    \eta(y) = \sum_{i=1}^\ell \lambda_i \eta(y^i)\text{,}$$
where at least one of the convex combinations in \eqref{eq:convcomb} is proper. This contradicts statement (ii).    
\end{proof}

\begin{prop}
    Every $(x,y) \in \Nash \GG$ can be expressed as a convex combination of extremal Nash equilibria.
\end{prop}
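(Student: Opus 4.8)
The plan is to leverage Proposition~\ref{prop:6a} together with the fact that $\epi\eta$ and $\epi\xi$ are convex polyhedra (Proposition~\ref{prop:3a}), so that standard Minkowski--Weyl/Krein--Milman-type decompositions apply to their faces. Fix a Nash equilibrium $(\bar x,\bar y)$ and set $\bar r \coloneqq \xi(\bar x) = \bar x^\top B\bar y$, $\bar s \coloneqq \eta(\bar y) = \bar y^\top A^\top\bar x$ (the equalities coming from Proposition~\ref{prop:4a}). Let $F$ be the (unique) face of $\epi\xi$ whose relative interior contains $(\bar x,\bar r)^\top$, and $G$ the face of $\epi\eta$ whose relative interior contains $(\bar y,\bar s)^\top$; such faces exist by, e.g., \cite[Theorem 18.2]{Rockafellar70}. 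The key structural input is Proposition~\ref{prop:34}: for \emph{every} $(x,r)^\top\in F$ and $(y,s)^\top\in G$ we get $r=\xi(x)$, $s=\eta(y)$, and $(x,y)\in\Nash\GG$.

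First I would argue that $F$ and $G$ are in fact \emph{bounded}, i.e.\ polytopes. Since $S$ is a polytope, $\xi$ is a finite polyhedral convex function on the polytope $S$ with $\dom\xi = S$, so $\epi\xi$ has recession cone $\{(0,t)^\top : t\ge 0\}$; hence any bounded face, in particular one not contained in an unbounded part — but here $\tilde F \subseteq S$ is bounded and on $F$ we have $r=\xi(x)$ with $x$ ranging over the bounded set $\tilde F$, and $\xi$ is continuous on $S$, so $F$ is bounded. The same holds for $G$. Therefore, by the Minkowski--Krein--Milman theorem, $F = \conv(\vertices F)$ and $G = \conv(\vertices G)$. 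In particular $(\bar x,\bar r)^\top = \sum_{i} \mu_i (x^i,r_i)^\top$ with $\mu_i>0$, $\sum_i\mu_i=1$, and $(x^i,r_i)^\top\in\vertices F$; similarly $(\bar y,\bar s)^\top = \sum_j \nu_j (y^j,s_j)^\top$ with $(y^j,s_j)^\top\in\vertices G$.

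Next I would verify that the vertices of $F$ are vertices of $\epi\xi$ and likewise for $G$. This is the standard fact that a vertex (0-dimensional face) of a face $F$ of a polyhedron $P$ is a face of $P$, hence a vertex of $P$. Combined with $r_i=\xi(x^i)$, $s_j=\eta(y^j)$ and $(x^i,y^j)\in\Nash\GG$ (all from Proposition~\ref{prop:34}), Proposition~\ref{prop:6a} tells us that each $(x^i,y^j)$ is an \emph{extremal} Nash equilibrium. It remains to exhibit $(\bar x,\bar y)$ as a convex combination of these $(x^i,y^j)$. Here I would use that the payoff $p$ restricted to the ``product face'' is affine in the right coordinates: on $F\times G$ we have $y^\top A^\top x = \eta(y) = s$ and $x^\top B y = \xi(x) = r$, so the map $(x,r,y,s)\mapsto (x,y,p(x,y)) = (x,y,s,r)$ is the restriction of a linear map. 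Writing $(\bar x,\bar y,\bar s,\bar r) = \sum_{i,j}\mu_i\nu_j\,(x^i,y^j,s_j,r_i)$ (using $\sum_j\nu_j=1$ in the $x$- and $r$-slots and $\sum_i\mu_i=1$ in the $y$- and $s$-slots) expresses $(\bar x,\bar y,p(\bar x,\bar y))$ as a convex combination of the $(x^i,y^j,p(x^i,y^j))$, all of which are extremal Nash equilibria.

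The main obstacle I anticipate is the boundedness/finiteness bookkeeping around the faces $F$ and $G$ — making sure one is allowed to invoke a finite vertex decomposition (this is where compactness of $S$ and $T$, already standing assumptions for constrained games in this paper, is used), and the verification that the payoff really is affine on the relevant product face so that the convex combination of triples $(x,y,p(x,y))$ goes through rather than merely a convex combination of the $(x,y)$ parts. Both are routine given the polyhedrality established earlier, but they are the points where care is needed.
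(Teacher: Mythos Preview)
Your proposal is correct and follows essentially the same route as the paper: pick faces $F$ of $\epi\xi$ and $G$ of $\epi\eta$ containing $(\bar x,\xi(\bar x))$ and $(\bar y,\eta(\bar y))$ in their relative interiors, decompose into vertices, invoke Proposition~\ref{prop:34} and Proposition~\ref{prop:6a} to certify each $(x^i,y^j)$ as extremal, and combine with product coefficients $\mu_i\nu_j$. Your version is in fact more careful than the paper's in two places---you justify boundedness of $F$ and $G$ (the paper silently uses that the vertices of these faces exist) and you check that the convex combination carries the payoff triple $(x,y,p(x,y))$ along, not just $(x,y)$---but these are refinements of the same argument rather than a different approach.
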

\begin{proof}
    The point $(x,\xi(x))$ belongs to some proper face $F$ of $\epi \xi$. Thus, it can be expressed as a convex combination of the vertices $(x^1,\xi(x^1)),\dots,(x^k,\xi(x^k))$ of $F$, that is, 
    $$ (x,\xi(x)) = \sum_{1=1}^k \lambda_i (x^i,\xi(x^i))$$
    for coefficients $\lambda_1,\dots,\lambda_k \geq 0$ with $\lambda_1 +\dots + \lambda_k = 1.$
    Likewise, there are vertices $(y^1,\eta(y^1)),\dots, (y^\ell,\eta(y^\ell))$ of a face $G$ of $\epi \eta$ such that $(y,\eta(y))$ is a convex combination of these points, where the coefficients are denoted by $\mu_1,\dots,\mu_\ell$ here. The face $F$ can be chosen such that $(x,\xi(x))$ belongs to the relative interior of $F$, and likewise for the face $G$. Propositions \ref{prop:34} and \ref{prop:6a} yield that $(x^i,y^j) \in \extNash \GG$ for all $(i,j) \in [k] \times [\ell]$. Clearly, $(x,y)$ can be written as a convex combination of the $(x^i,y^j)$, where the coefficients are $\gamma_{ij} \coloneqq \lambda_i\mu_j$.    
\end{proof}

These considerations lead to the following method which computes the set $\extNash{\GG}$ of a constrained bi-matrix game $\GG=(A,B;S,T)$. 
\begin{enumerate}[(A)]
	\item Compute all vertices $(x^1,\alpha_1), \dots, (x^k,\alpha_k)$ of $\epi \xi$.
 \item Compute all vertices $(y^1,\beta_1), \dots, (y^\ell,\beta_\ell)$ of $\epi \eta$.
	\item Iterate over $(i,j) \in [k] \times [\ell]$. If
	$$ (x^i)^{\top} B y^j = \alpha_i \quad \text{ and } \quad  (y^j)^{\top} A^{\top} x^i = \beta_j\text{,}$$
	mark $(x^i,y^j)$ as extremal Nash equilibrium point of $\GG$. 
\end{enumerate}

\section{Low rank games}\label{sec:lowrank}

In this section, we present a method for replacing low-rank bi-matrix games with typically smaller constrained bi-matrix games, referred to as the \textit{reduced game}, which are equivalent in a precisely defined sense. We also provide a necessary and sufficient condition for the validity of this reduction.


Given a game \( \mathcal{G} = (A, B, S, T) \), we consider a singular value decomposition of \( A + tB \), where \( t \in \mathbb{R} \setminus \{0\} \) is a fixed arbitrary nonzero real number. Let \( k \) denote the rank of \( A + tB \). Then, there exist matrices \( U \in \mathbb{R}^{m \times k} \), \( V \in \mathbb{R}^{n \times k} \), and \( \Sigma \in \mathbb{R}^{k \times k} \) such that 
\begin{equation}\label{eq:svd}
   A + tB = U \Sigma V^{\top}, \quad U^{\top} U = I_k, \quad V^{\top} V = I_k, 
\end{equation}
where \( I_k \) is the \( k \times k \) identity matrix, and \( \Sigma \) is a diagonal matrix containing the \( k \) nonzero singular values of \( A + tB \).
Note that \( U U^{\top} \neq I_m \) and \( V V^{\top} \neq I_n \) in general.

The game $ \bar \GG = ( \bar A, \bar B, \bar S, \bar T )$ with 
$$ \bar A \coloneqq U^{\top} A V \in \R^{k \times k}, \quad \bar B \coloneqq U^{\top} B V \in \R^{k \times k},$$
$$ \bar S \coloneqq U^{\top}[S], \quad \bar T \coloneqq V^{\top}[T],$$
is called the \textit{reduced game} of $\GG$. Likewise, we refer to the reduced payoff matrices $\bar{A}, \bar{B}$, and the reduced constraint sets $\bar{S}, \bar{T}$.

It is desirable to be able to reconstruct the original game from the reduced one. In particular, we aim to satisfy the condition
\begin{equation}\label{eq:restore_payoff}  
A = U \bar{A} V^{\top} \quad \text{and} \quad B = U \bar{B} V^{\top},  
\end{equation}
which is motivated by the following result.

\begin{prop}\label{prop:56}
    Let $\bar \GG  = (\bar A, \bar B, \bar S, \bar T)$ be a reduced game of $\GG = (A, B, S, T)$ such that \eqref{eq:restore_payoff} is satisfied. Then, $(x,y) \in \Nash \GG$ if and only if 
    $$ (U^{\top} x,\, V^{\top} y) \in \Nash \bar{\GG} \quad\wedge\quad (x,y) \in S \times T.$$
\end{prop}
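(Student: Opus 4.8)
The plan is to unwind the Nash-equilibrium condition through the best-response characterization in Proposition~\ref{prop:4a}, using the payoff-restoration identities \eqref{eq:restore_payoff} together with the orthonormality relations $U^{\top}U = I_k$ and $V^{\top}V = I_k$ from \eqref{eq:svd}. The central observation is that the bilinear payoffs are invariant under the substitution $x \mapsto U^{\top}x$, $y \mapsto V^{\top}y$: since $A = U\bar A V^{\top}$, for any $x \in \R^m$, $y \in \R^n$ we have $y^{\top}A^{\top}x = y^{\top}(V \bar A^{\top} U^{\top})x = (V^{\top}y)^{\top}\bar A^{\top}(U^{\top}x)$, and similarly $x^{\top}By = (U^{\top}x)^{\top}\bar B(V^{\top}y)$. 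So the payoff functions $p$ for $\GG$ and $\bar p$ for $\bar\GG$ satisfy $p(x,y) = \bar p(U^{\top}x, V^{\top}y)$ on all of $S \times T$.

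First I would fix $(x,y) \in S \times T$ and set $\bar x \coloneqq U^{\top}x \in \bar S$, $\bar y \coloneqq V^{\top}y \in \bar T$ (membership is immediate from the definitions $\bar S = U^{\top}[S]$, $\bar T = V^{\top}[T]$). Next I would relate the optimal value functions: I claim $\eta(y) = \bar\eta(\bar y)$ and $\xi(x) = \bar\xi(\bar x)$, where $\bar\eta, \bar\xi$ are the value functions of $\bar\GG$. For this, take any $x' \in S$; then $U^{\top}x' \in \bar S$ and $y^{\top}A^{\top}x' = \bar y^{\top}\bar A^{\top}(U^{\top}x')$, so the feasible value at $x'$ in \eqref{p1} equals the feasible value at $U^{\top}x'$ in the reduced program; this gives $\eta(y) \le \bar\eta(\bar y)$. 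Conversely, any $\bar x' \in \bar S = U^{\top}[S]$ is $\bar x' = U^{\top}x'$ for some $x' \in S$, and the same identity runs backwards, giving $\bar\eta(\bar y) \le \eta(y)$. Hence equality, and symmetrically for $\xi$.

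With these two ingredients the equivalence falls out of Proposition~\ref{prop:4a} applied to both games. If $(x,y) \in \Nash\GG$, then $(x,y) \in S\times T$ and $y^{\top}A^{\top}x = \eta(y)$, $x^{\top}By = \xi(x)$; rewriting the left sides via payoff invariance and the right sides via the value-function identity yields $\bar y^{\top}\bar A^{\top}\bar x = \bar\eta(\bar y)$ and $\bar x^{\top}\bar B\bar y = \bar\xi(\bar x)$ with $(\bar x,\bar y)\in\bar S\times\bar T$, i.e.\ $(U^{\top}x, V^{\top}y)\in\Nash\bar\GG$. For the converse, suppose $(U^{\top}x,V^{\top}y)\in\Nash\bar\GG$ and $(x,y)\in S\times T$; running the same chain of equalities in reverse recovers $y^{\top}A^{\top}x = \eta(y)$ and $x^{\top}By = \xi(x)$, so $(x,y)\in\Nash\GG$ by Proposition~\ref{prop:4a}.

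The only genuinely delicate point is the value-function identity, and specifically the surjectivity used in the ``conversely'' direction: one must know that every point of $\bar S$ is the $U^{\top}$-image of an \emph{actual} point of $S$, not merely of some point of $\R^m$. This is true by construction, since $\bar S$ is \emph{defined} as $U^{\top}[S]$, but it is worth stating explicitly because it is exactly where a naive attempt (treating $U^{\top}$ as if it were invertible, which it is not since $UU^{\top}\neq I_m$) would break down. Everything else is routine bookkeeping with the orthonormality relations.
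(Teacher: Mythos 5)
Your proof is correct and follows essentially the same route as the paper: both rest on the payoff identity $y^{\top}A^{\top}x = (V^{\top}y)^{\top}\bar A^{\top}(U^{\top}x)$ (and its analogue for $B$) obtained from \eqref{eq:restore_payoff}, together with the fact that $\bar S = U^{\top}[S]$ and $\bar T = V^{\top}[T]$, so the optimality conditions transfer between $\GG$ and $\bar\GG$. The value-function identity you establish along the way is exactly the paper's later Proposition~\ref{prop:52a}; the paper's own proof phrases the same argument directly in terms of the $\argmax$ sets rather than via Proposition~\ref{prop:4a}, so the difference is purely presentational.
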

\begin{proof}
    By definition, \((x, y) \in \Nash \GG\) if and only if 
    \((x, y) \in S \times T\) and 
    \[
    x \in \argmax_{y \in T} y^{\top} A^{\top} x, \qquad y \in \argmax_{x \in S} x^{\top} B y.
    \]
    If \eqref{eq:restore_payoff} holds, this is equivalent to 
    \[
    V^{\top} x \in \argmax_{U^{\top} y \in U^{\top}[T]} \underbrace{y^{\top} U}_{= \bar y^{\top}} \bar A^{\top} \underbrace{V^{\top} x}_{= \bar x}, \qquad U^{\top} y \in \argmax_{V^{\top} x \in V^{\top}[S]} \underbrace{x^{\top} V}_{= \bar x^{\top}} \bar B \underbrace{U^{\top} y}_{= \bar y},
    \]
    which implies the result.
\end{proof}

Whether the original payoff matrices can be recovered from the reduced payoff matrices according to \eqref{eq:restore_payoff} is uncertain as shown by the following example where \eqref{eq:restore_payoff} is violated:\:
\[ 
A = 
\begin{pmatrix}
0 & 1 \\
0 & 1
\end{pmatrix}, \;
B = 
\begin{pmatrix}
0 & 1 \\
0 & 0
\end{pmatrix},\]
and for $t=-1$ we get
\[
U = 
\begin{pmatrix}
0 \\
1
\end{pmatrix}, \;
V = 
\begin{pmatrix}
0 \\
1
\end{pmatrix}, \;
\bar{A} = 
\begin{pmatrix}
1 
\end{pmatrix}, \;
\bar{B} = 
\begin{pmatrix}
0
\end{pmatrix}.
\]
The following proposition proves that the restorability property of the payoff matrices in \eqref{eq:restore_payoff} is equivalent to the condition
\begin{gather}
\label{ass:A}
\begin{aligned}
    \dim(\ran A + \ran B) = \rank(A+tB) = \dim(\ran A^{\top} + \ran B^{\top})
\end{aligned}
\end{gather}
recalling that $\ran M = M[\R^n]$ is the range space of $M \in \R^{m \times n}$.
\begin{prop}\label{prop:51}
    Let $\GG = (A, B, S, T)$ be a game. The payoff matrices $A$ and $B$ of $\GG$ can be restored from the reduced game $\bar{\GG}$ according to \eqref{eq:restore_payoff} if and only if \eqref{ass:A} holds.
\end{prop}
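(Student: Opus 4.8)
I would prove the equivalence by analysing exactly when the maps $x \mapsto U\bar A V^\top$ and $x\mapsto U\bar B V^\top$ reproduce $A$ and $B$. Write $P_U \coloneqq UU^\top$ for the orthogonal projector of $\R^m$ onto $\ran U$, and $P_V \coloneqq VV^\top$ for the orthogonal projector of $\R^n$ onto $\ran V$; these are genuine projectors because $U^\top U = V^\top V = I_k$. Then $U\bar A V^\top = U U^\top A V V^\top = P_U A P_V$, and similarly $U\bar B V^\top = P_U B P_V$. So \eqref{eq:restore_payoff} holds if and only if
\[
 P_U A P_V = A \quad\text{and}\quad P_U B P_V = B.
\]
The first observation is that $\ran U = \ran(A+tB) = \ran\!\big((A+tB)^\top\big)^{\!\perp\perp}$; more usefully, from \eqref{eq:svd} we get $\ran U = \ran(A+tB)$ and $\ran V = \ran(A+tB)^\top = \ran(A^\top + tB^\top)$. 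The key inclusions to keep in mind are therefore $\ran(A+tB) \subseteq \ran A + \ran B$ and $\ran(A^\top+tB^\top)\subseteq \ran A^\top + \ran B^\top$, which always hold; condition \eqref{ass:A} says precisely that both are equalities.

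\textbf{Sufficiency.} Suppose \eqref{ass:A} holds. Then $\ran A \subseteq \ran A + \ran B = \ran(A+tB) = \ran U$, so $P_U A = A$, and likewise $P_U B = B$. On the other side, $\ran A^\top \subseteq \ran A^\top + \ran B^\top = \ran(A^\top + tB^\top) = \ran V$, so $P_V A^\top = A^\top$, i.e. $A P_V = A$; similarly $B P_V = B$. Combining, $P_U A P_V = A P_V = A$ and $P_U B P_V = B$, which is \eqref{eq:restore_payoff}.

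\textbf{Necessity.} Conversely, suppose \eqref{eq:restore_payoff} holds, i.e. $P_U A P_V = A$ and $P_U B P_V = B$. Applying $P_U$ on the left to $A = P_U A P_V$ and using $P_U^2 = P_U$ gives nothing new, but taking ranges: $\ran A = \ran(P_U A P_V) \subseteq \ran P_U = \ran U = \ran(A+tB)$, and the same for $B$, hence $\ran A + \ran B \subseteq \ran(A+tB)$; together with the always-valid reverse inclusion this gives $\ran A + \ran B = \ran(A+tB)$, so $\dim(\ran A + \ran B) = \rank(A+tB)$. Transposing $A = P_U A P_V$ yields $A^\top = P_V A^\top P_U$, so $\ran A^\top \subseteq \ran P_V = \ran V = \ran(A^\top + tB^\top)$, and likewise for $B^\top$; this gives $\ran A^\top + \ran B^\top = \ran(A^\top + tB^\top)$, hence the third equality in \eqref{ass:A}. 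Thus \eqref{ass:A} holds.

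\textbf{Main obstacle.} The only subtlety is the bookkeeping identification $\ran U = \ran(A+tB)$ and $\ran V = \ran(A+tB)^\top$, which must be justified carefully from the SVD \eqref{eq:svd}: since $\Sigma$ is invertible, $A+tB = U\Sigma V^\top$ forces $\ran(A+tB) = \ran U$ (as $U\Sigma$ has the same range as $U$) and $\ran(A+tB)^\top = \ran(V\Sigma U^\top) = \ran V$. After that the argument is just the observation that $P_U A = A \iff \ran A \subseteq \ran U$ together with the elementary fact that $\ran(MN)\subseteq \ran M$, applied symmetrically on both sides and to transposes. I expect no real difficulty beyond being careful that \eqref{ass:A} genuinely packages both the column-space condition (governing $P_U$) and the row-space condition (governing $P_V$), which is why the middle term $\rank(A+tB)$ appears in both equalities.
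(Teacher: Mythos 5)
Your argument is correct and is essentially the paper's proof in orthogonal-projector language: both directions come down to $\ran A+\ran B\subseteq\ran U=\ran(A+tB)$ together with the transposed analogue and the always-valid reverse inclusion, with your observation that $\ran A\subseteq\ran U$ gives $P_UA=A$ playing the role of the paper's explicit factorization $A=(A+tB)X$, which likewise yields $UU^{\top}A=A$. The only slip is the unused aside $\ran(A+tB)=\ran\big((A+tB)^{\top}\big)^{\perp\perp}$, whose two sides live in $\R^m$ and $\R^n$ respectively, but you discard it immediately and it plays no role in the argument.
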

\begin{proof}
For arbitrary matrices \( A, B \) of the same dimension and arbitrary \( t \in \mathbb{R} \setminus \{0\} \), we have
\begin{equation}\label{eq:range0}
\ran (A + t B) \subseteq \ran A + \ran B.
\end{equation}
By the first equation in \eqref{ass:A}, the subspaces on both sides of the inclusion have the same dimension. Therefore, this part of \eqref{ass:A} is equivalent to 
\begin{equation}\label{eq:range1}
\ran (A + t B) = \ran A + \ran B.
\end{equation}
Taking into account that \( \text{rank}(A + t B) = \text{rank}(A^{\top} + t B^{\top}) \), we see that the second equation in \eqref{ass:A} is equivalent to 
\begin{equation}\label{eq:range2}
\ran (A^{\top} + t B^{\top}) = \ran A^{\top} + \ran B^{\top}.
\end{equation}

The columns of \( A \) belong to \( \ran A \), and by \eqref{eq:range1}, they also belong to \( \ran (A + t B) \). Hence, there exists a matrix \( X \in \mathbb{R}^{n \times n} \) such that \( A = (A + t B) X \). Using the singular value decomposition of \( A + t B \) in \eqref{eq:svd}, we obtain \( A = U M \) for \( M \coloneqq \Sigma V^{\top} X \). This implies \( U U^{\top} A = U M = A \). Similarly, we obtain \( U U^{\top} B = B \). Using \eqref{eq:range2} instead of \eqref{eq:range1}, we obtain \( A = A V V^{\top} \) and \( B = B V V^{\top} \). We conclude
\[
A = U U^{\top} A V V^{\top} = U \bar{A} V^{\top},
\]
and a corresponding statement for \( B \), i.e., \eqref{eq:restore_payoff} holds.

Now, assume \eqref{eq:restore_payoff} is satisfied. Let \( y \in \ran A + \ran B \), i.e., there exist vectors \( x \) and \( z \) such that
\[
y = A x + B z = U (\bar{A} V^{\top} x + \bar{B} V^{\top} z).
\]
Then \( y \in \ran U \), i.e., \( \ran A + \ran B \subseteq \ran U \) holds. In \eqref{eq:svd}, we have \( \text{rank}(\Sigma V^{\top}) = k \), so every vector \( z \in \mathbb{R}^k \) can be expressed as \( z = \Sigma V^{\top} x \) for some \( x \in \mathbb{R}^n \). Thus, \( \ran U = \ran (A + t B) \), and therefore,
\[
\ran A + \ran B \subseteq \ran (A + t B).
\]
Using \eqref{eq:range0}, we obtain \eqref{eq:range1}. By similar arguments, we obtain \eqref{eq:range2}.
\end{proof}

Note that the assumption
\begin{equation}\label{eq:rankcond}
    \rank(A + t B) = \rank A + \rank B
\end{equation}
implies condition \eqref{ass:A}. Indeed we have 
\begin{align*}
\rank A + \rank B &= \dim (\ran A) + \dim (\ran B) \\
&\geq \dim (\ran A + \ran B) \\
&\geq \dim (\ran (A + t B)) \\
&= \rank (A + t B),
\end{align*}
where the last inequality follows from \eqref{eq:range0}. Thus by \eqref{eq:rankcond} we obtain the first equation of \eqref{ass:A}. The second one follows analogously taking into account that the row rank of a matrix equals the column rank.

However \eqref{eq:rankcond} is not equivalent to \eqref{ass:A} and thus not necessary for the restorability of the payoff matrices in \eqref{eq:restore_payoff}, as demonstrated by the following example:
\[
A = \begin{pmatrix}
0 & 1 & 0 \\
0 & 0 & 0 \\
0 & 1 & 1
\end{pmatrix},
\quad
B = \begin{pmatrix}
0 & 0 & 0 \\
0 & 0 & 0 \\
0 & 1 & 0
\end{pmatrix}.
\]
We have $\rank A = 2$, $\rank B = 1$, and $\rank (A+ tB)=2$ for all $t \in \R \setminus\Set{0}$, thus \eqref{eq:rankcond} is violated. Choosing, for example, $t=-1$, we obtain: 
\[
U =
\begin{pmatrix}
1 & 0 \\
0 & 0 \\
0 & 1
\end{pmatrix},
\;
V =
\begin{pmatrix}
0 & 0 \\
1 & 0 \\
0 & 1
\end{pmatrix},
\quad
\bar{A} =
\begin{pmatrix}
1 & 0 \\
1 & 1
\end{pmatrix},
\;
\bar{B} =
\begin{pmatrix}
0 & 0 \\
1 & 0
\end{pmatrix}.
\]
We see that \eqref{eq:restore_payoff}, and equivalently \eqref{ass:A}, are satisfied.

For the reduced game $\bar \GG$ we consider the linear programs
\leqnomode
\begin{gather}\label{p1bar}\tag{$\bar{\rm P}_1(y)$}
  \max y^{\top} \bar{A}^{\top} x \;\text{ s.t. } x \in \bar{S}\text{,}
\end{gather}
\reqnomode
\leqnomode
\begin{gather}\label{p2bar}\tag{$\bar{\rm P}_2(x)$}
  \max x^{\top} \bar{B} y \;\text{ s.t. } y \in \bar{T}\text{,}
\end{gather}
\reqnomode
and the respective optimal value functions
$$ \bar{\eta} : \R^k \to  \R \cup \Set{\infty},\; \bar{\eta}(y) \coloneqq \left\{ \begin{array}{cl}
    \text{optimal value of \eqref{p1bar}} & \text{ if } y \in \bar{T}  \\
    \infty & \text{ otherwise} 
\end{array} \right.\text{,}$$
$$ \bar{\xi} : \R^k \to  \R \cup \Set{\infty},\; \bar{\xi}(x) \coloneqq \left\{ \begin{array}{cl}
    \text{optimal value of \eqref{p2bar}} & \text{ if } x \in \bar{S}  \\
    \infty & \text{ otherwise} 
\end{array} \right.\text{.}$$
If \eqref{eq:restore_payoff} holds—or equivalently, \eqref{ass:A}—then the optimal value functions of $\GG$ and $\bar{\GG}$ are related, as shown in the next two propositions.

\begin{prop}\label{prop:52a}
    Let $\bar \GG  = (\bar A, \bar B, \bar S, \bar T)$ be a reduced game of $\GG = (A, B, S, T)$ such that \eqref{eq:restore_payoff} is satisfied. Then
$$\forall y \in T:\; \eta(y) = \bar{\eta}(V^{\top} y),$$
$$\forall x \in S:\; \xi(x) = \bar{\xi}(U^{\top} x).$$
\end{prop}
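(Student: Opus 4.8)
The plan is to reduce both identities to a single algebraic observation: under \eqref{eq:restore_payoff} the bilinear objective of player~1's linear program \eqref{p1} factors through the maps $U^{\top}$ and $V^{\top}$, and the feasible set of the reduced program \eqref{p1bar} is, by definition, exactly the image of the original feasible set under $U^{\top}$. Because of this, maximizing the original objective over $x\in S$ is literally the same as maximizing the reduced objective over $\bar x\in\bar S$. The identity for $\xi$ is obtained by the symmetric computation with $A,B$, $S,T$, and $U,V$ interchanged in the appropriate way.

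Concretely, I would proceed as follows. First, fix $y\in T$; since $\bar T=V^{\top}[T]$ we have $V^{\top}y\in\bar T$, so $\bar\eta(V^{\top}y)$ is the (finite) optimal value of \eqref{p1bar} at $V^{\top}y$, and $\eta(y)$ is the finite optimal value of \eqref{p1} because $S$, hence $\bar S=U^{\top}[S]$, is a nonempty polytope. Second, using $A=U\bar A V^{\top}$ from \eqref{eq:restore_payoff}, I record the elementary identity
\[
y^{\top}A^{\top}x \;=\; y^{\top}V\,\bar A^{\top}\,U^{\top}x \;=\; (V^{\top}y)^{\top}\,\bar A^{\top}\,(U^{\top}x)\qquad\text{for all }x\in\R^{m},
\]
so the objective of \eqref{p1} depends on $x$ only through $U^{\top}x$. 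Third, I substitute $\bar x=U^{\top}x$: as $x$ ranges over $S$, the vector $\bar x$ ranges exactly over $U^{\top}[S]=\bar S$, whence $\eta(y)=\max_{x\in S} y^{\top}A^{\top}x=\max_{\bar x\in\bar S}(V^{\top}y)^{\top}\bar A^{\top}\bar x=\bar\eta(V^{\top}y)$. Fourth, I repeat the argument with $B=U\bar B V^{\top}$: for fixed $x\in S$ we get $U^{\top}x\in\bar S$ and $x^{\top}By=(U^{\top}x)^{\top}\bar B\,(V^{\top}y)$ for all $y$, so the objective of \eqref{p2} depends on $y$ only through $V^{\top}y$, and the change of variables $\bar y=V^{\top}y$ together with $V^{\top}[T]=\bar T$ gives $\xi(x)=\bar\xi(U^{\top}x)$.

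I do not expect a real obstacle here; the proposition is essentially a bookkeeping exercise. The only points that need care are the transpose manipulations in the factored objective and the explicit use of the defining identities $\bar S=U^{\top}[S]$, $\bar T=V^{\top}[T]$ (in particular that $y\in T$ forces $V^{\top}y\in\bar T$ and $x\in S$ forces $U^{\top}x\in\bar S$), which is what guarantees that the maxima on the two sides are taken over matching sets. Note that only the factorization $A=U\bar A V^{\top}$, $B=U\bar B V^{\top}$ is used, so the hypothesis \eqref{eq:restore_payoff} — equivalently \eqref{ass:A} by Proposition~\ref{prop:51} — enters exactly once and in the most direct way.
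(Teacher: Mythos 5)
Your proposal is correct and follows essentially the same route as the paper's proof: substitute the factorization $A = U\bar A V^{\top}$ (resp.\ $B = U\bar B V^{\top}$) into the bilinear objective and change variables $\bar x = U^{\top}x$ (resp.\ $\bar y = V^{\top}y$), using $\bar S = U^{\top}[S]$ and $\bar T = V^{\top}[T]$ to match the feasible sets. Your explicit check that $V^{\top}y \in \bar T$ (so that $\bar\eta(V^{\top}y)$ is indeed the LP value and not $\infty$) is a minor point the paper leaves implicit, but it does not change the argument.
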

\begin{proof}
    Let $y \in T$. Using \eqref{eq:restore_payoff} we obtain
    \begin{align*}
         \eta(y) & = \max_{x \in S} y^{\top} A^{\top} x \\
                 & = \max_{x \in S} y^{\top} \bar{V} \bar{A}^{\top} \bar{U}^{\top} x \\
                 & = \max_{\bar x \in \bar S} y^{\top} \bar{V} \bar{A}^{\top} \bar{x} 
                 = \bar{\eta}(\bar{V}^{\top} y).
    \end{align*}
    The proof of the second statement is analogous. 
\end{proof}

\begin{prop}\label{prop:52}
    Let $\bar \GG  = (\bar A, \bar B, \bar S, \bar T)$ be a reduced game of $\GG = (A, B, S, T)$ such that \eqref{eq:restore_payoff} is satisfied. Then
    $$ \begin{pmatrix} y \\ r \end{pmatrix} \in \epi \eta \quad \iff \quad 
    \left[ \begin{pmatrix} \bar{V} y \\ r \end{pmatrix} \in \epi \bar{\eta} \;\;\wedge\;\; y \in T \right ],$$
    $$ \begin{pmatrix} x \\ r \end{pmatrix} \in \epi \xi \quad \iff \quad \left[ \begin{pmatrix} \bar{U} x \\ r \end{pmatrix} \in \epi \bar{\xi} \;\;\wedge\;\; x \in S \right].$$    
\end{prop}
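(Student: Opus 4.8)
The plan is to unfold both epigraphs and then invoke Proposition~\ref{prop:52a} for the optimal values. First I would record that, by the definitions of $\eta$ and $\bar\eta$ together with the convention that no real number is $\geq\infty$,
$$\epi\eta = \Set{(y,r)^{\top} \given y \in T,\ r \geq \eta(y)}, \qquad \epi\bar\eta = \Set{(\bar y,r)^{\top} \given \bar y \in \bar T,\ r \geq \bar\eta(\bar y)}.$$
Thus membership in $\epi\eta$ already forces $y \in T$, and membership of $(V^{\top}y,r)^{\top}$ in $\epi\bar\eta$ already forces $V^{\top}y \in \bar T$; the extra conjunct ``$\wedge\ y \in T$'' on the right-hand side is precisely what reconciles these two domain conditions.

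For ``$\Rightarrow$'': if $(y,r)^{\top}\in\epi\eta$, then $y\in T$ and $r\geq\eta(y)$; Proposition~\ref{prop:52a} gives $\eta(y)=\bar\eta(V^{\top}y)$, and $\bar T = V^{\top}[T]$ gives $V^{\top}y\in\bar T$, so $(V^{\top}y,r)^{\top}\in\epi\bar\eta$ and $y\in T$ both hold. For ``$\Leftarrow$'': if $(V^{\top}y,r)^{\top}\in\epi\bar\eta$ and $y\in T$, then Proposition~\ref{prop:52a} again gives $\eta(y)=\bar\eta(V^{\top}y)\leq r$, hence $(y,r)^{\top}\in\epi\eta$. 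This settles the first equivalence; the second, with $\xi$, $\bar\xi$, $U$, and $\bar S = U^{\top}[S]$ replacing $\eta$, $\bar\eta$, $V$, and $\bar T = V^{\top}[T]$, follows by the identical argument.

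I do not expect a genuine obstacle: once Proposition~\ref{prop:52a} is available, the statement is bookkeeping about domains. The one point that needs care is that the conjunct ``$y\in T$'' on the right-hand side cannot be dropped — a fiber $\Set{y \given V^{\top}y = \bar y}$ may contain points outside $T$ that still satisfy $\bar y \in \bar T$, so $(\bar y,r)^{\top}\in\epi\bar\eta$ alone does not recover $(y,r)^{\top}\in\epi\eta$. Relatedly, in the ``$\Leftarrow$'' direction it is the hypothesis $y\in T$, not the weaker $V^{\top}y\in\bar T$, that is used; and in the ``$\Rightarrow$'' direction $V^{\top}y\in\bar T$ is obtained by forward evaluation of $V^{\top}$ on $T$, directly from the definition $\bar T = V^{\top}[T]$, so Proposition~\ref{prop:im_inv} is not needed. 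Finally, Proposition~\ref{prop:52a} already presupposes \eqref{eq:restore_payoff} (equivalently \eqref{ass:A}), so the hypothesis of the present proposition suffices and nothing further is required.
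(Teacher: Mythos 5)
Your proposal is correct and follows essentially the same route as the paper: unfold membership in $\epi\eta$ as ``$y\in T$ and $r\geq\eta(y)$'', apply Proposition~\ref{prop:52a} to replace $\eta(y)$ by $\bar\eta(V^{\top}y)$, and note that $y\in T$ yields $V^{\top}y\in\bar T=V^{\top}[T]$, with the $\xi$ statement handled analogously. The paper's proof is just the two-line chain of equivalences; your extra remarks on why the conjunct $y\in T$ cannot be dropped are a correct (and worthwhile) elaboration, not a deviation.
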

\begin{proof}
Using Proposition \ref{prop:52a}, we get
    \begin{align*}
        \begin{pmatrix} y \\ r \end{pmatrix} \in \epi \eta \quad &\iff \quad r \geq \eta(y) = \bar{\eta}(\bar{V}^{\top} y), \; y \in T \\
        &\iff \quad  \begin{pmatrix} \bar{V} y \\ r \end{pmatrix} \in \epi \bar{\eta}, \; y \in T 
    \end{align*}
    The proof of the second statement is analogous.
\end{proof}

As a consequence of Proposition \ref{prop:52} we obtain the following representations of epigraphs. 
We use the notation 
$$ \VV \coloneqq  \begin{pmatrix} 
\bar{V}^{\top} & 0 \\ 
0 & 1 
\end{pmatrix}, \qquad  \UU \coloneqq \begin{pmatrix} 
\bar{U}^{\top} & 0 \\ 
0 & 1 
\end{pmatrix} .$$
\begin{cor}\label{cor:53}
    Let $\bar \GG  = (\bar A, \bar B, \bar S, \bar T)$ be a reduced game of $\GG = (A, B, S, T)$ such that \eqref{eq:restore_payoff} is satisfied. Then
   \[
\epi \bar{\eta} = 
\VV 
[\epi \eta],\qquad \epi \eta = \VV^{-1}[\epi \bar{\eta}] \cap (T \times \R),
\]
\[
\epi \bar{\xi} = 
\UU
[\epi \xi], \qquad \epi \xi = \UU^{-1}[\epi \bar{\xi}] \cap (S \times \R),\]   
\end{cor}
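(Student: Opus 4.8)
The plan is to read off all four identities directly from Proposition \ref{prop:52}, which already encodes the relationship between $\epi\eta$ and $\epi\bar\eta$ (and between $\epi\xi$ and $\epi\bar\xi$) in essentially the required form; the corollary is just its reformulation in terms of the block maps $\VV$ and $\UU$. It is enough to treat the pair $\eta,\bar\eta$ with $\VV$ and $T$, since the pair $\xi,\bar\xi$ with $\UU$ and $S$ is handled by the same words. The single fact I will invoke is the equivalence supplied by Proposition \ref{prop:52}: $(y,r)^{\top}\in\epi\eta$ if and only if $\VV(y,r)^{\top}\in\epi\bar\eta$ and $y\in T$.

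First I would dispose of the two inverse-image identities $\epi\eta = \VV^{-1}[\epi\bar\eta]\cap(T\times\R)$ and $\epi\xi = \UU^{-1}[\epi\bar\xi]\cap(S\times\R)$. Unwinding the definition of inverse image, a point $(y,r)^{\top}$ lies in $\VV^{-1}[\epi\bar\eta]\cap(T\times\R)$ exactly when $\VV(y,r)^{\top}\in\epi\bar\eta$ and $y\in T$; by the equivalence above this is precisely the condition $(y,r)^{\top}\in\epi\eta$. So these two identities are a verbatim restatement of Proposition \ref{prop:52}.

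Next I would prove the two image identities $\epi\bar\eta = \VV[\epi\eta]$ and $\epi\bar\xi = \UU[\epi\xi]$. The inclusion $\supseteq$ is the forward implication of the equivalence. For $\subseteq$, take $(\bar y,r)^{\top}\in\epi\bar\eta$; since $\bar\eta$ equals $+\infty$ outside its domain $\bar T = V^{\top}[T]$, we must have $\bar y\in\bar T$, so there is $y\in T$ with $\VV(y,r)^{\top}=(\bar y,r)^{\top}$. The backward implication then gives $(y,r)^{\top}\in\epi\eta$, hence $(\bar y,r)^{\top}\in\VV[\epi\eta]$.

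The only step that needs a little care is this last $\subseteq$ inclusion, where one must use the definition $\bar T = V^{\top}[T]$ of the reduced constraint set (together with the fact that the optimal value function takes the value $+\infty$ off its domain) to produce a $\VV$-preimage of a point of $\epi\bar\eta$ that again lies in $T\times\R$. Everything else is pure bookkeeping on top of Proposition \ref{prop:52}, which in turn rests on the restorability condition \eqref{eq:restore_payoff} via Proposition \ref{prop:52a}. As an alternative to the direct argument, the image identities could also be obtained by composing the inverse-image identities with Proposition \ref{prop:im_inv} and the trivial inclusion $\epi\eta\subseteq T\times\R$, but this route is longer and gains nothing.
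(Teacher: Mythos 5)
Your proof is correct and follows exactly the route the paper intends: the corollary is stated there without proof as an immediate consequence of Proposition \ref{prop:52}, and your derivation is precisely that reformulation, with the one point that genuinely needs an argument (producing a preimage in $T\times\R$ for the inclusion $\epi\bar\eta\subseteq\VV[\epi\eta]$ via $\bar T=V^{\top}[T]$ and $\bar\eta=\infty$ off $\bar T$) correctly identified and handled.
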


The following two lemmas address some technical requirements needed for the forthcoming main result.

\begin{lem}\label{lem:55}
Let $\bar \GG  = (\bar A, \bar B, \bar S, \bar T)$ be a reduced game of $\GG = (A, B, S, T)$ such that \eqref{eq:restore_payoff} is satisfied.
Let $\bar{F}$ be a face of $\epi{\bar{\xi}}$. Then
\[
F \coloneqq \UU^{-1}[\bar{F}] \cap (S \times \R)
\]
is a face of $\epi \xi$, and $\UU[F] = \bar F$. Likewise, if 
$\bar{G}$ is a face of $\epi{\bar{\eta}}$, then
\[
G \coloneqq \VV^{-1}[\bar{G}] \cap (T \times \R)
\]
is a face of $\epi \eta$, and $\VV[G] = \bar G$.
\end{lem}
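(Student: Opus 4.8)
The plan is to reduce everything to the two identities supplied by Corollary~\ref{cor:53}, namely $\epi\bar\xi=\UU[\epi\xi]$ and $\epi\xi=\UU^{-1}[\epi\bar\xi]\cap(S\times\R)$, together with the trivial remark that $\epi\xi\subseteq S\times\R$ (because $\xi\equiv\infty$ outside $S$). All of these have verbatim analogues with $\UU,\bar\xi,\xi,S$ replaced by $\VV,\bar\eta,\eta,T$, so I would prove the statement for the $\xi$-part only and obtain the $\eta$-part by symmetry.

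I would begin with the identity $\UU[F]=\bar F$, since it is also used afterwards. The inclusion $\UU[F]\subseteq\bar F$ is immediate from $F\subseteq\UU^{-1}[\bar F]$. For the converse, given $\bar w\in\bar F\subseteq\epi\bar\xi=\UU[\epi\xi]$, pick $z\in\epi\xi$ with $\UU z=\bar w$; then $z\in\epi\xi\subseteq S\times\R$ and $\UU z=\bar w\in\bar F$, so $z\in\UU^{-1}[\bar F]\cap(S\times\R)=F$ and hence $\bar w\in\UU[F]$. The same chain of inclusions gives $F\subseteq\epi\xi$: for $z\in F$ we have $\UU z\in\bar F\subseteq\epi\bar\xi$, so $z\in\UU^{-1}[\epi\bar\xi]\cap(S\times\R)=\epi\xi$.

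To see that $F$ is a face of $\epi\xi$, I would invoke the standard fact that the definition of face in Section~\ref{sec:notation} is, for polyhedra, equivalent to the following (cf.\ \cite{Rockafellar70}): a convex subset $F$ of a polyhedron $P$ is a face iff $z_1,z_2\in P$ and $\lambda z_1+(1-\lambda)z_2\in F$ for some $\lambda\in(0,1)$ force $z_1,z_2\in F$. Here $F$ is convex as an intersection of convex sets, $\epi\xi$ is a polyhedron by Proposition~\ref{prop:3a}, and $F\subseteq\epi\xi$ was just shown. So let $z_1,z_2\in\epi\xi$ with $z_0:=\lambda z_1+(1-\lambda)z_2\in F$, $\lambda\in(0,1)$. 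From $\epi\xi\subseteq\UU^{-1}[\epi\bar\xi]$ we get $\UU z_1,\UU z_2\in\epi\bar\xi$, and from $z_0\in F\subseteq\UU^{-1}[\bar F]$ we get $\UU z_0=\lambda\,\UU z_1+(1-\lambda)\,\UU z_2\in\bar F$; as $\bar F$ is a face of $\epi\bar\xi$ this forces $\UU z_1,\UU z_2\in\bar F$, i.e.\ $z_1,z_2\in\UU^{-1}[\bar F]$, which together with $z_1,z_2\in\epi\xi\subseteq S\times\R$ gives $z_1,z_2\in F$.

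I do not expect a serious obstacle; the one thing to be careful about is the bookkeeping: \emph{both} directions of $\epi\xi=\UU^{-1}[\epi\bar\xi]\cap(S\times\R)$ are needed — ``$\subseteq$'' to transport the membership $\UU z_i\in\epi\bar\xi$, and ``$\supseteq$'' to put the lifted points back into $\epi\xi$ — and the extra factor $S\times\R$ (resp.\ $T\times\R$) must be carried along everywhere. If one prefers to argue literally with supporting hyperplanes, the only addition is a short case split: $\bar F\in\{\emptyset,\epi\bar\xi\}$ give $F\in\{\emptyset,\epi\xi\}$ at once, and for a proper face $\bar F=\epi\bar\xi\cap\{w:\bar c^{\top}w=\bar\gamma\}$ one sets $c:=\UU^{\top}\bar c$, which is nonzero because $\bar U$ has orthonormal hence linearly independent columns so $\UU^{\top}$ is injective, and then verifies $F=\epi\xi\cap\{z:c^{\top}z=\bar\gamma\}$, a genuine supporting-hyperplane section because $F=\UU^{-1}[\bar F]\cap(S\times\R)$ is nonempty by $\UU[F]=\bar F\neq\emptyset$.
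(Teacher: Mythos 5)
Your proposal is correct and follows essentially the same route as the paper: it uses the two identities of Corollary~\ref{cor:53} to show $F\subseteq\epi\xi$ and $\UU[F]=\bar F$ (the surjectivity onto $\bar F$ resting, as in the paper, on $\bar F\subseteq\epi\bar\xi=\UU[\epi\xi]\subseteq\UU[S\times\R]$), and verifies the face property by pushing a proper convex combination forward with $\UU$ and invoking that $\bar F$ is a face, exactly as the paper does. Your closing supporting-hyperplane variant with $c=\UU^{\top}\bar c$ is a fine optional addition but not needed.
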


\begin{proof} We first note that, by Corollary \ref{cor:53}, $F$ is a subset of $\epi \xi$. Moreover, $F$ is convex.
Now, let $z^1, z^2 \in \epi \xi$, $\lambda \in (0,1)$ such that 
$ \lambda z^1 +
(1-\lambda) z^2 \in F
$.
By Corollary \ref{cor:53} we have
$
\bar{z}^1
\coloneqq
\UU 
z^1 \in \epi \bar{\xi}$ and
$
\bar{z}^2
\coloneqq
\UU 
z^2 \in \epi \bar{\xi}
$.
Using the definition of $F$ we obtain
$
\lambda 
\bar{z}^1 +
(1-\lambda) 
\bar{z}^2 \in \bar{F}
$.  
Since $\bar F$ is a face of $\epi \bar{\xi}$, we conclude that $\bar{z}^1, \bar{z}^2\in \bar{F}$.  
Using the fact that $\epi \xi \subseteq S \times \R$, we get $z^1,z^2 \in F$. This proves that $F$ is a face of $\epi \xi$.

Further we have 
\begin{align*}
  \UU[F] & = \UU[ \UU^{-1}[\bar{F}] \cap (S \times \R)] \\ 
         & = \Set{ t \given t = \UU z,\; t \in \bar F,\; z \in S \times \R} \\
         & = \bar F \cap \UU[S \times \R]\\
         &= \bar F,
\end{align*}
where the last equation can be derived using the following consequence of Corollary \ref{cor:53}:
$$ \bar F \subseteq \epi \bar \xi = \UU[\epi \xi] \subseteq \UU[S \times \R].$$
The remaining statements can be shown similarly.
\end{proof}

\begin{lem}\label{lem:57}
    Let $P,Q \subseteq \R^n$ be nonempty convex polyhedra such that $Q \subseteq P$. Then there exists a face $F$ of $P$ such that
    $$ F \supseteq Q \quad \text{ and }  \quad \ri F \cap \ri Q \neq \emptyset.$$
\end{lem}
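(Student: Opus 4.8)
The plan is to pick a relative interior point of $Q$ and let $F$ be the face of $P$ that contains this point in its relative interior; the work is then to check that this $F$ contains all of $Q$.

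First I would choose any $q \in \ri Q$, which exists since $Q$ is nonempty, and note $q \in P$ because $Q \subseteq P$. Every convex polyhedron is the disjoint union of the relative interiors of its faces (including the improper faces $\emptyset$ and $P$); this is \cite[Theorem 18.2]{Rockafellar70}, or concretely: writing $P = \{x : Ax \le b\}$ and letting $I$ be the index set of constraints active at $q$, the face $F := \{x \in P : a_i^\top x = b_i \text{ for } i \in I\}$ satisfies $q \in \ri F$. In either case we obtain a face $F$ of $P$ with $q \in \ri F$. Since also $q \in \ri Q$, this immediately gives $\ri F \cap \ri Q \neq \emptyset$, so it only remains to prove $Q \subseteq F$.

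To that end I fix an arbitrary $q' \in Q$. Since $q \in \ri Q$ and $q' \in Q = \cl Q$, the prolongation principle for relative interior points (see e.g.\ \cite[Proposition 1.3.3]{bertsekas}) yields a point $\hat q \in Q$ and a scalar $\lambda \in (0,1)$ with $q = \lambda q' + (1-\lambda)\hat q$, i.e.\ $q$ is a proper convex combination of $q'$ and $\hat q$. Both $q'$ and $\hat q$ lie in $Q \subseteq P$, while $q$ lies in the face $F$; by the defining property of a face of a convex polyhedron — a point of $F$ that is a proper convex combination of two points of $P$ forces both of them into $F$ (trivial for $F = P$, and for a proper face $P \cap H$ it follows by evaluating the supporting functional on the combination) — we conclude $q', \hat q \in F$, in particular $q' \in F$. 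As $q'$ was arbitrary, $Q \subseteq F$, and together with $q \in \ri F \cap \ri Q$ this proves the claim; the argument for the face $G$ of $\epi \eta$ in the companion statement is identical. I do not anticipate a genuine obstacle here: the only point needing a moment's care is that the prolongation principle relies on $Q$ being closed, which holds automatically for polyhedra, so that the extended point $\hat q$ is guaranteed to remain in $Q$.
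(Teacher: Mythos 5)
Your proof is correct and follows essentially the same route as the paper: choose a relative interior point of $Q$, take the (unique) face $F$ of $P$ containing it in its relative interior (your explicit active-constraint construction is exactly the paper's), and then use the prolongation principle together with the defining property of a face to absorb all of $Q$ into $F$. No gaps; the remark about closedness of $Q$ is a correct and appropriate check.
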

\begin{proof} Since $Q$ is nonempty there is some $\bar{x} \in \ri Q$. The convex polyhedron $P$ can be represented by finitely many affine inequalities:
$$P = \Set{x \in \R^n |\; (a^1)^{\top} x \leq b_1,\,\dots,\, (a^m)^{\top} x \leq b_m}.$$ 
Without loss of generality we can assume that there is some integer $k$ with $0 \leq k \leq m$ such that
$$(a^1)^{\top} \bar{x} = b_1,\,\dots,\, (a^k)^{\top} \bar{x} = b_k,$$ 
$$(a^{k+1})^{\top} \bar{x} < b_{k+1},\,\dots,\, (a^m)^{\top} \bar{x} < b_m.$$
Using $k$, which depends on $\bar{x}$, we define the affine subspace 
$$H \coloneqq \Set{x \in \R^n |\; (a^1)^{\top} x = b_1,\,\dots,\, (a^k)^{\top} x = b_k}.$$
Then $F \coloneqq H \cap P$ is a face of $P$ with $\bar x \in \ri F \cap \ri Q$.

Let $x \in Q \subseteq P$. Since $\bar x \in \ri Q$, there exists some $z \in Q \subseteq P$ and some $\lambda \in (0,1)$ such that 
$\bar x = \lambda x + (1-\lambda) z$ (prolongation principle). Using $\bar{x} \in F$ and the fact that $F$ is a face of $P$, we obtain $x \in F$. Thus we have $F \supseteq Q$.
\end{proof}

A face $F$ of the epigraph $\epi f$ of a extended real-valued function $f$ is called \textit{non-vertical} if every point of $F$ is of the form $(x,f(x))$. Two polyhedral convex sets $F \subseteq \R^{m+1}$, $G \subseteq \R^{n+1}$ are called a \textit{pair of Nash faces} of a bi-matrix game $\GG$, denoted
$$ (F,G) \in \NashFaces \GG,$$
if $F$ is a non-vertical face of $\epi \xi$, $G$ is a non-vertical face of $\epi \eta$, and
$$ \Set{x \given (x,\xi(x))^{\top} \in F} \times  \Set{y \given (y,\eta(y))^{\top} \in G} \subseteq \Nash \GG.$$
A pair $(\bar F,\bar G)\in \NashFaces \GG$ is called \textit{maximal}, denoted as
$$ (\bar F,\bar G)\in \maxNashFaces \GG,$$
if it satisfies the following property:
$$ \left[(F,G)\in \NashFaces,\; F\supseteq \bar F,\; G \supseteq \bar G\right] \implies (\bar F,\bar G)= (F,G).$$
\begin{thm}\label{thm:main}
Let $\bar \GG  = (\bar A, \bar B, \bar S, \bar T)$ be a reduced game of $\GG = (A, B, S, T)$ such that \eqref{eq:restore_payoff} is satisfied.
The mapping
$$ (F,G) \mapsto (\UU[F],\VV[G])$$ is a one-to-one correspondence between $\maxNashFaces \GG$ and $\maxNashFaces \bar{\GG}$. The inverse map is
$$ (\bar{F}\,,\,\bar{G}) \mapsto (\UU^{-1}[\bar{F}] \cap (S \times \R)\,,\; \VV^{-1}[\bar G] \cap (T \times \R)).$$
\end{thm}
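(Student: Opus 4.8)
The plan is to show that the two maps in the statement — call them $\Phi\colon(F,G)\mapsto(\UU[F],\VV[G])$ and $\Psi\colon(\bar F,\bar G)\mapsto(\UU^{-1}[\bar F]\cap(S\times\R),\;\VV^{-1}[\bar G]\cap(T\times\R))$ — each carry maximal Nash-face pairs to maximal Nash-face pairs, and then that they compose to the identity in both orders. Throughout, for a face $F$ of $\epi\xi$ write $\widetilde F\coloneqq\Set{x\given(x,\xi(x))^{\top}\in F}$, and analogously $\widetilde G,\widetilde{\bar F},\widetilde{\bar G}$; for a non-vertical face this is exactly the projection dropping the last coordinate. The single fact that ties the two games together is the following consequence of Proposition~\ref{prop:56}: since $\widetilde F\subseteq S$ and $\widetilde G\subseteq T$ automatically, for non-vertical faces one has $\widetilde F\times\widetilde G\subseteq\Nash\GG$ if and only if $U^{\top}[\widetilde F]\times V^{\top}[\widetilde G]\subseteq\Nash\bar\GG$. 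Because the $\xi/\UU$-part and the $\eta/\VV$-part decouple in every other respect, I would run the argument on the two coordinates in parallel.

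First I would treat $\Psi$. Given $(\bar F,\bar G)\in\NashFaces\bar\GG$ put $(F,G)\coloneqq\Psi(\bar F,\bar G)$. Lemma~\ref{lem:55} already gives that $F,G$ are faces of $\epi\xi,\epi\eta$ with $\UU[F]=\bar F$, $\VV[G]=\bar G$; feeding in Proposition~\ref{prop:52a} and the non-verticality of $\bar F$ shows $F=\Set{(x,\xi(x))^{\top}\given x\in S,\ U^{\top}x\in\widetilde{\bar F}}$, so $F$ is non-vertical with $\widetilde F=\Set{x\in S\given U^{\top}x\in\widetilde{\bar F}}$, and likewise for $G$. The bridge above then yields $\widetilde F\times\widetilde G\subseteq\Nash\GG$, so $(F,G)\in\NashFaces\GG$; and $\Phi(F,G)=(\UU[F],\VV[G])=(\bar F,\bar G)$, so $\Phi\circ\Psi=\mathrm{id}$ on $\NashFaces\bar\GG$.

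The delicate direction is $\Phi$, and I expect it to be the main obstacle: the image $\UU[F]$ of a non-vertical face of $\epi\xi$ need not be a face of $\epi\bar\xi$ — in general a linear image of a face of a polyhedron is not a face. I would not attack this directly; instead, given $(F,G)\in\NashFaces\GG$, apply Lemma~\ref{lem:57} to the inclusion $\UU[F]\subseteq\epi\bar\xi$ (which holds by Corollary~\ref{cor:53}) to obtain a face $\bar F$ of $\epi\bar\xi$ with $\UU[F]\subseteq\bar F$ and $\ri\bar F\cap\ri\UU[F]\neq\emptyset$, and similarly $\bar G\supseteq\VV[G]$. Pick a point $(\bar x_0,\bar\xi(\bar x_0))^{\top}$ in $\ri\bar F\cap\ri\UU[F]$ (every point of $\UU[F]$ is a graph point by Proposition~\ref{prop:52a}) together with a preimage $x'\in\widetilde F$, $U^{\top}x'=\bar x_0$, and symmetrically $y'\in\widetilde G$, $V^{\top}y'=\bar y_0$; then $(x',y')\in\Nash\GG$, so $(\bar x_0,\bar y_0)\in\Nash\bar\GG$ by Proposition~\ref{prop:56}. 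Applying Proposition~\ref{prop:34} \emph{inside $\bar\GG$} to $(\bar x_0,\bar y_0)$ produces faces $\hat F$ of $\epi\bar\xi$ and $\hat G$ of $\epi\bar\eta$ that are non-vertical, contain $(\bar x_0,\bar\xi(\bar x_0))^{\top}$ resp.\ $(\bar y_0,\bar\eta(\bar y_0))^{\top}$ in their relative interiors, and satisfy $\widetilde{\hat F}\times\widetilde{\hat G}\subseteq\Nash\bar\GG$. Since distinct faces of a convex polyhedron have disjoint relative interiors (\cite[Theorem 18.2]{Rockafellar70}), $\hat F=\bar F$ and $\hat G=\bar G$, so $(\bar F,\bar G)\in\NashFaces\bar\GG$ with $\bar F\supseteq\UU[F]$, $\bar G\supseteq\VV[G]$. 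This ``enlarge to the minimal enclosing face, then identify it via Proposition~\ref{prop:34} and disjointness of relative interiors'' is the step I expect to be genuinely non-routine; everything else is bookkeeping with Lemmas~\ref{lem:55} and~\ref{lem:57} and Proposition~\ref{prop:56}.

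Finally I would bootstrap to maximality and close the loop. If $(\bar F,\bar G)\in\maxNashFaces\bar\GG$ and $(F,G)=\Psi(\bar F,\bar G)$, and $(F',G')\in\NashFaces\GG$ with $F'\supseteq F$, $G'\supseteq G$, then the $\Phi$-construction of the previous paragraph applied to $(F',G')$ gives a pair in $\NashFaces\bar\GG$ dominating $(\UU[F'],\VV[G'])$, which in turn dominates $(\UU[F],\VV[G])=(\bar F,\bar G)$; maximality of $(\bar F,\bar G)$ collapses it, forcing $\UU[F']\subseteq\bar F$, hence $F'\subseteq\UU^{-1}[\bar F]\cap(S\times\R)=F$, so $F'=F$ and $G'=G$. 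Thus $\Psi$ lands in $\maxNashFaces\GG$ and $\Phi$ inverts it there. Symmetrically, starting from $(F,G)\in\maxNashFaces\GG$, the $\Phi$-construction supplies $(\bar F,\bar G)\in\NashFaces\bar\GG$ with $\bar F\supseteq\UU[F]$; applying $\Psi$ to $(\bar F,\bar G)$ returns a pair in $\NashFaces\GG$ dominating $(F,G)$, so maximality of $(F,G)$ gives $\Psi(\bar F,\bar G)=(F,G)$, whence $\UU[F]=\bar F$ (so $\Phi(F,G)=(\bar F,\bar G)$, matching the stated formula), and the same domination argument shows $(\bar F,\bar G)\in\maxNashFaces\bar\GG$. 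This would establish that $\Phi$ and $\Psi$ are mutually inverse bijections between $\maxNashFaces\GG$ and $\maxNashFaces\bar\GG$ with the formulas as claimed.
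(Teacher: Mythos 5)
Your proposal is correct and takes essentially the same route as the paper's proof: the same key ingredients appear in the same roles (Lemma~\ref{lem:55} for the pullback faces with $\UU[F]=\bar F$, Lemma~\ref{lem:57} together with Proposition~\ref{prop:34} to enclose $\UU[F]$ in a Nash face of $\bar\GG$ sharing a relative interior point, Propositions~\ref{prop:52a} and~\ref{prop:56} to transfer value functions and equilibria, and Proposition~\ref{prop:im_inv} for the inclusion bookkeeping). The only difference is organizational—you first work at the level of $\NashFaces$ and then bootstrap maximality, while the paper argues directly on maximal pairs—so this is not a genuinely different argument.
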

\begin{proof}
Denote the mapping by $\Phi$, i.e., 
$$ \Phi((F,G)) = (\UU[F],\VV[G]). $$
We first show that for all $(F,G) \in \maxNashFaces \GG$, we have $(\UU[F],\VV[G]) \in \maxNashFaces \bar{\GG}$.
By Corollary \ref{cor:53}, $\UU[F] \subseteq \epi\bar{\xi}$ and $\VV[G] \subseteq \epi\bar{\eta}$. Since $(F,G) \in \NashFaces \GG$, any point $(f,g) \in F \times G$ is of the form 
$$f = \begin{pmatrix}
    x \\ \xi(x)
\end{pmatrix},\;  g = \begin{pmatrix}
    y \\ \eta(y)
\end{pmatrix},\;  (x,y) \in \Nash \GG.$$ 
By Propositions \ref{prop:52a} and \ref{prop:56}, we obtain that $(\bar{f},\bar{g}) \in \UU[F] \times \VV[G]$ implies
$$\bar{f} = \begin{pmatrix}
    V^{\top} x \\ \xi(V^{\top} x)
\end{pmatrix},\;  \bar{g} = \begin{pmatrix}
    U^{\top} y \\ \eta(U^{\top} y)
\end{pmatrix},\;  (V^{\top} x,U^{\top} y) \in \Nash \bar{\GG}.$$ 
By Lemma \ref{lem:57}, there are faces $\bar{F}$ of $\epi\bar{\xi}$ and $\bar{G}$ of $\epi\bar{\eta}$ such that 
$$\bar{F} \supseteq \UU[F],\; \bar{G}\supseteq Q[G],\; \ri\bar{F}\cap \ri\UU[F] \neq \emptyset,\; \ri\bar{G}\cap \ri\VV[G] \neq \emptyset.$$
By Proposition \ref{prop:34}, we have $(\bar{F},\bar{G})\in \NashFaces\bar{\GG}$. These Nash faces can be enlarged to maximal Nash faces: There are $(\hat{F},\hat{G})\in \maxNashFaces \bar{\GG}$ such that 
$$ \hat{F} \supseteq \bar{F} \supseteq \UU[F], \qquad \hat{G} \supseteq \bar{G} \supseteq \VV[G].$$
Now we set 
$$\tilde{F} \coloneqq \UU^{-1}[\hat{F}]  \cap (S \times \R), \quad \tilde{G} \coloneqq \VV^{-1}[\hat{G}]  \cap (T \times \R).$$
By Lemma \ref{lem:55}, $\tilde{F}$ is a face of $\epi \xi$ and $\tilde{G}$ is a face of $\epi\eta$.
Using Proposition \ref{prop:im_inv}, we get
$\UU^{-1}[\hat{F}] \supseteq \UU^{-1}[\UU[F]] \supseteq F$.
Since $F \subseteq \epi\xi \subseteq S \times \R$, and using a similar argumentation $\tilde{G}$, we obtain 
$$ \tilde{F}\supseteq F,\quad \tilde{G}\supseteq G.$$
Propositions \ref{prop:52a} and \ref{prop:56} imply that $(\tilde{F},\tilde{G})\in \NashFaces$. Maximality of $(F,G)$ yields $F = \tilde{F}$ and $\tilde{G}= G$. Thus we have
$F = \UU^{-1}[\hat{F}]\cap (S \times \R)$, and by Lemma \ref{lem:55}, $\UU[F] = \hat{F}$, and likewise $\VV[G]= \hat{G}$. This proves our first claim that $(\UU[F],\VV[G]) \in \maxNashFaces \bar{\GG}$.

In this second part of the proof, let $(\bar{F},\bar{G})\in \maxNashFaces\bar{\GG}$ be arbitrarily given. We show that $(F,G) \in \maxNashFaces\GG$ for
$$ F \coloneqq \UU^{-1}[\bar{F}]\cap(S\times\R),\quad G \coloneqq \VV^{-1}[\bar{G}]\cap(T\times\R).$$
 By Lemma \ref{lem:55}, $F$ is a face of $\epi \xi$ satisfying $\UU[F]=\bar{F}$, and likewise for $G$. Elements of $\bar F$ are of the form $(\bar{x},\bar{\xi}(\bar{x}))$. By Proposition \ref{prop:52a} we see that elements of $F$ must be of the form $(x,\xi(x))$ for $V^{\top} x = \bar{x}$. Using similar statements for $G$, Proposition \ref{prop:56} yields that $(F,G)\in \NashFaces\GG$. Let $(\tilde{F},\tilde{G}) \in \maxNashFaces\GG$ such that $\tilde{F}\supseteq F$ and $\tilde{G}\supseteq G$. As shown in the first part of the proof, we have $(\UU[\tilde{F}],\VV[\tilde{G}]) \in \maxNashFaces \bar{\GG}$ and 
 $$ \UU[\tilde{F}] \supseteq \UU[F] = \bar F,\quad \VV[\tilde{G}] \supseteq \VV[G] = \bar G. $$
 The maximality of $(\bar{F},\bar{G})$ implies $\UU[\tilde{F}]= \bar F$ and $\VV[\tilde{G}]= \bar{G}$.
Using Proposition \ref{prop:im_inv}, we get
$\UU^{-1}[\bar{F}] = \UU^{-1}[\UU[\tilde{F}]] \supseteq \tilde{F}$. Since $\tilde{F} \subseteq S\times \R$, we get
$$ \tilde{F} = \tilde{F} \cap (S \times \R) \subseteq \UU^{-1}[\bar{F}] \cap (S\times \R) = F.$$
This results in $\tilde{F} = F$ and likewise we obtain $\tilde{G}= G$, which proves that $(F,G)\in \maxNashFaces\GG$.

In the third part we show that 
$$ \tilde{F} \coloneqq \UU^{-1}[\UU[F]] \cap (S \times \R) = F,\quad
   \tilde{G} \coloneqq \VV^{-1}[\VV[G]] \cap (T \times \R) = G$$
holds for every $(F,G)\in \maxNashFaces \GG$.
Using Proposition \ref{prop:im_inv}, we get $\tilde{F} \supseteq F$ and $\tilde{G}\supseteq G$. As shown in the first two parts of the proof, 
$$(\tilde{F},\tilde{G})\in \maxNashFaces \GG.$$ The maximality of $(F,G)$ yields the claim.

It remains to show that 
$$ \hat{F} \coloneqq \UU[\UU^{-1}[\bar{F}] \cap (S \times \R)]= \bar{F},\quad 
   \hat{G} \coloneqq \VV[\VV^{-1}[\bar{G}] \cap (T \times \R)] = \bar{G},$$
holds for every $(\bar{F},\bar{G})\in \maxNashFaces \bar{\GG}$, which follows in a similar way.
\end{proof}

\section{Computing the vertices of epigraphs} \label{sec_methods}

In this section, we present three methods for computing the vertices of the epigraphs of the optimal value functions \( \eta \) and \( \xi \).

The first method relies on vertex enumeration. It is a classical result that the set of extremal Nash equilibrium points in unconstrained bimatrix games can be determined via vertex enumeration; that is, by computing the vertices of certain polytopes defined by affine inequalities (see~\cite{mangasarian64}).

The second method builds on the observation that a P-representation of \( \epi \eta \) was derived in the proof of Proposition~\ref{prop:3a}. The vertices of a P-represented polyhedron can be obtained by solving a \emph{polyhedral projection problem}, which is equivalent to a vector linear program—that is, a multi-objective linear program with an arbitrary polyhedral ordering cone~\cite{LoeWei16}.

Given the equivalence between vector linear programming and polyhedral projection, it is natural that a vector linear programming solver can also be applied directly. This observation forms the basis of the third method.

\bigskip
\noindent
\textbf{Via vertex enumeration:}
A point \( (y, r) \) belongs to \( \epi \eta \) if and only if
\[
y \in T \quad \text{and} \quad \forall x \in S: \; r \geq y^{\top} A^{\top} x\text{.}
\]
Since \( S \) is a polytope, it suffices to consider only the vertices of \( S \). Moreover, \( T \) can be described by finitely many linear inequalities. As a result, \( \epi \eta \) admits a representation in terms of finitely many linear inequalities, and vertex enumeration can then be used to compute its vertices. The procedure for computing the vertices of \( \epi \xi \) is analogous.

A disadvantage of this simple method is that the vertices of \( S \) and the inequalities defining \( T \) need to be computed. While this is straightforward in the case of unconstrained games, it can become computationally expensive in the constrained setting. 

In the important special case of a reduced game arising from an unconstrained bi-matrix game, the number of vertices of the reduced strategy set \( \bar{S} \) cannot exceed the number of vertices of the probability simplex \( S \); that is, \( \bar{S} \) has at most \( m+1 \) vertices. Moreover, it is not necessary to explicitly compute the vertices of \( \bar{S} \), as the finitely many inequalities can be generated by iterating over all \( \bar{x} = V^{\top} x \), where \( x \) is a vertex of the probability simplex \( S \).

\bigskip
\noindent
\textbf{Via polyhedral calculus and polyhedral projection:} Polyhedral calculus, introduced in \cite{CirLoeWei19} and implemented in the software {\it bensolve tools} \cite{CirLoeWei19,bt}, is a technique for performing a range of basic operations on polyhedral convex sets using P-representations. These operations include: 
\begin{itemize} 
\item finite intersections: $\bigcap_{i=1}^n P_i$ 
\item finite Cartesian products: $P_1 \times \dots \times P_n$ 
\item the closed conic hull: $\cl \cone P \coloneqq \cl (\R_+ P)$ 
\item the polar cone: $P^* \coloneqq \Set{y \given \forall x \in P: y^{\top} x \leq 0}$ 
\item the polar set: $P^\circ \coloneqq \Set{y \given \forall x \in P: y^{\top} x \leq 1}$
\item the image under a linear transformation $M$: 
\[
P[M] \coloneqq \Set{M x \given x \in P}
\]
\item the inverse image under a linear transformation $M$: 
\[
Q^{-1}[M] \coloneqq \Set{x \given M x \in Q}.
\]
\end{itemize}
Note that the closure in the definition of the conic hull can be omitted if $P$ is a polytope. Additionally, the polar cone is defined not only for cones but for arbitrary convex polyhedra $P$.

The goal is to express the epigraphs of $\eta$ and $\xi$ in terms of the game data $(A, B, S, T)$ for $\GG$, using a finite sequence of the operations listed above.

As already mentioned, a point $(y,r)$ belongs to $\epi \eta$ if and only
$$ y \in T \quad\text{and}\quad \forall x \in S:\; r \geq y^{\top} A^{\top} x\text{.}$$
This can be expressed equivalently as
$$ y \in T \quad\text{and}\quad \forall z \in S \times \Set{1}:\; \begin{pmatrix}
    Ay\\
    -r
\end{pmatrix}^{\top} z \leq 0 \text{.}$$
Using the concept of polar cone this can be written as
$$ y \in T \quad\text{and}\quad \begin{pmatrix}
    A & \phantom{-}0\\
    0 & -1
\end{pmatrix}  
\begin{pmatrix}
    y\\
    r
\end{pmatrix} =  \begin{pmatrix}
    Ay\\
    -r
\end{pmatrix} \in (S \times \Set{1})^*\text{.}$$
Using the concept of the inverse image under a linear transformation, the epigraph of $\eta$ can be expressed as
\begin{equation}
    \epi \eta = (T \times \R) \cap \begin{pmatrix}
        A & \phantom{-}0\\
        0 & -1
    \end{pmatrix}^{-1} [(S \times \Set{1})^*].
    \label{eq:prep1}
\end{equation}
Similarly, we obtain
\begin{equation}
    \epi \xi = (S \times \R) \cap \begin{pmatrix}
        B^{\top} & \phantom{-}0\\
        0\phantom{^{\top}} & -1
    \end{pmatrix}^{-1}[(T \times \Set{1})^*].
    \label{eq:prep2}
\end{equation}
P-representations of the epigraphs can now be easily obtained as compositions the operations involved in \eqref{eq:prep1} and \eqref{eq:prep2}. This can be achieved, for instance, using \textit{bensolve tools}. Once a P-representation of the epigraphs is found, the vertices can be computed by solving a polyhedral projection problem, which can also be accomplished with \textit{bensolve tools}.

\bigskip
\noindent
\textbf{Via vector linear programming:} 
We now want to show how the vertices of the epigraphs can be obtained directly using a vector linear programming solver. Let us first consider the particular case of unconstrained games. 
Consider the multiobjective linear programs
\leqnomode
	\begin{gather}\label{molp1}\tag{MOLP$_1$}
	  \max A^{\top} x 
         \text{ s.t. } x \in S\text{,}
	\end{gather}
\reqnomode
and 
\leqnomode
	\begin{gather}\label{molp2}\tag{MOLP$_2$}
	  \max B y 
         \text{ s.t. } y \in T\text{.}
	\end{gather}
\reqnomode
The linear programs~\eqref{p1} and~\eqref{p2} are simply weighted-sum scalarizations. A point \( x \in S \) is a \emph{weakly efficient point} of~\eqref{molp1} if and only if there exists \( y \in T \) such that \( x \) is an optimal solution of~\eqref{p1}. It is essential that \( T \) be the probability simplex, since the ordering cone of~\eqref{molp1} is \( \mathbb{R}^m_+ \). In this case, \( T \) serves as a base of the negative polar cone of the ordering cone—which is again \( \mathbb{R}^m_+ \).

A slight extension of this idea can also be applied to constrained games. Moreover, we observe that multi-objective linear programming duality yields the epigraphs of the optimal value functions \( \xi \) and \( \eta \).

We observe that $\epi \eta$ is the extended image of the geometric dual of the vector linear program
\leqnomode
	\begin{gather}\label{vlp1a}\tag{VLP$_1$}
	  \textstyle\max_D \begin{pmatrix}
	  	A^{\top} x \\
		0
	  \end{pmatrix} \text{ s.t. } x \in S\text{,}
	\end{gather}
\reqnomode
where
$$ D \coloneqq (-T \times \Set{-1})^* \subseteq \R^n \times \R $$
is the order cone, and the geometric duality parameter vector $d \in \Int D$ is the $(n+1)$-th unit vector.
$$ $$
Likewise, $\epi \xi$ is the extended image of the geometric dual of the vector linear program
\leqnomode
\begin{gather}\label{vlp2a}\tag{VLP$_2$}
	  \textstyle\min_C \begin{pmatrix}
	  	B y \\
		0
	  \end{pmatrix} \text{ s.t. } y \in T\text{.}
\end{gather}
\reqnomode
$$ C \coloneqq (-S \times \Set{-1})^* \subseteq \R^m \times \R\text{,} $$
and the geometric duality parameter vector $c \in \Int C$ is the $(m+1)$-th unit vector.

This connection can be seen as follows: It is known from geometric duality \cite{HeyLoe08,LoeWei17} that the extended image of the geometric dual of a vector linear program 
\leqnomode
	\begin{gather}\label{vlp}\tag{VLP}
	  \textstyle\max_D P x \text{ s.t. } x \in S\text{,}
	\end{gather}
\reqnomode
with geometric duality parameter $d \in \Int D$, $d_q = 1$ and objective matrix $P \in \R^{q \times m}$ is the epigraph of the following optimal value function
$\gamma : \R^{q-1} \to \R \cup \Set{\infty}$. For $y \in \R^{q-1}$, the value $\gamma(y)$ is the optimal value of the weighted sum scalarization of $\eqref{vlp}$ with weight vector 
$$ w = \left(y_1,\dots,y_{q-1},1-\sum_{i=1}^{q-1} d_i y_i\right) \in -D^\circ.$$
If $w \not\in -D^\circ$, we set $\gamma(y) = \infty$.
Since $q=n+1$ and $d$ is the $(n+1)$-th unit vector, the weight vector is of the form 
$w = (y,1)^{\top}$. Moreover, we have
$$-D^\circ = \cone(T \times \Set{1}).$$
Therefore, the conditions on $w$ translate to $w = (y,1)^{\top}$, $y \in T$ and the weighted sum scalarization of \eqref{vlp} is just the linear program \eqref{p1} introduced in the previous section.

\section{Numerical experiments}\label{sec:numeric}

In this work, whenever we solve constrained or unconstrained bi-matrix games, we employ the method based on polyhedral calculus~\cite{CirLoeWei19} and polyhedral projection introduced in the previous section, implemented using the software \textit{bensolve tools}~\cite{bt, CirLoeWei19}. This approach serves as a proof of concept, and we note that more efficient methods could be developed—for example, along the lines of the techniques for unconstrained games surveyed in~\cite{vStengel21}.

We begin with an example that illustrates the drawbacks of reformulating a constrained game as an unconstrained one, as proposed, for instance, in~\cite[Section 4]{MenZha14}, and compare it with directly solving the constrained game.

\begin{ex} Consider a constrained bi-matrix game given by the matrices  
\[
A = \begin{pmatrix}
-2 & \m 1 \\
-4 & -2
\end{pmatrix}, \quad
B = \begin{pmatrix}
-3 & \m 2 \\
\m 4 & -2
\end{pmatrix}
\]
and the feasible sets $S$ and $T$ both are the convex hulls of the columns of the matrix
\[
U \coloneqq \begin{pmatrix}
-2 & \m 2 & -1 & \m 1 & -1 & -2 & \m 1 & \m 2 \\
-1 & -1 & -2 & -2 & \m 2 & \m 1 & \m 2 & \m 1
\end{pmatrix}.
\]
We computed $5$ extremal Nash equilibria for the constrained game in approximately $0.1$ seconds. The constrained game can be reformulated as an unconstrained game by setting
\[
A' = U^{\top} A U, \qquad B' = U^{\top} B U.
\]
While the original constrained game has size \(2 \times 2\), the reformulated game is of size \(8 \times 8\). In this reformulation, we computed $148$ extremal Nash equilibria in about $20$ seconds.    
\end{ex} 

In all the following examples, the matrices \( A \in \mathbb{Z}^{m \times n} \) and \( B \in \mathbb{Z}^{m \times n} \) are generated as products of low-rank integer matrices: \( A = M_A N_A \) and \( B = M_B N_B \), where \( M_A \in \mathbb{Z}^{m \times r_A} \), \( N_A \in \mathbb{Z}^{r_A \times n} \), \( M_B \in \mathbb{Z}^{m \times r_B} \), and \( N_B \in \mathbb{Z}^{r_B \times n} \). The entries of each matrix are independently sampled from their own integer range: \( M_A \) from \([a(M_A), b(M_A)]\), \( N_A \) from \([a(N_A), b(N_A)]\), \( M_B \) from \([a(M_B), b(M_B)]\), and \( N_B \) from \([a(N_B), b(N_B)]\). This construction ensures that \( A \) and \( B \) have ranks at most \( r_A \) and \( r_B \), respectively. 

After generating a low-rank game as described, we check whether condition~\eqref{ass:A} is satisfied. If this is the case—which held in most of our experiments—we proceed to generate the corresponding low-rank game and solve it. 

If condition~\eqref{ass:A} is not satisfied, one can adjust the parameter \( t \) in the singular value decomposition and try again. However, the function
\[
t \mapsto \rank(A + tB)
\]
generically attains its maximum, as the following argument shows.\footnote{The authors thank Professor Raman Sanyal (Goethe University Frankfurt) for pointing this out.}

The determinants of the \( k \times k \) submatrices of \( A + tB \) are univariate polynomials in \( t \). The matrix \( A + tB \) has rank at most \( k \) if all such polynomials vanish. Assuming that at least one of these polynomials is not identically zero, they can vanish only for finitely many values of \( t \). Thus, for generic \( t \), the matrix \( A + tB \) has \( k \) linearly independent columns.

We observed in Proposition~\ref{prop:52a} that the optimal value functions of the original and reduced games coincide. These values represent the payoffs for both players. A \emph{payoff plot} illustrates the possible payoffs corresponding to maximal pairs of Nash faces, using distinct colors to distinguish each pair. More precisely, for every maximal pair of Nash faces \((F, G)\), we display the set
\[
\bigl\{\,(\eta(y), \xi(x))^{\top} \mid (x, \xi(x))^{\top} \in F, \; (y, \eta(y))^{\top} \in G \,\bigr\}.
\]
By Proposition~\ref{prop:52a}, together with our main result, Theorem~\ref{thm:main}, we see that the original and reduced games have exactly the same payoff plots. However, since the one-to-one correspondence in the main result is restricted to maximal pairs of Nash equilibria, the number of extreme Nash equilibria may differ.

\begin{ex}\label{ex:ex03-paper}
%
As a concrete example, we set \( m = n = 100 \) and \( r_A = r_B = 2 \). The matrices \( A = M_A N_A \) and \( B = M_B N_B \) are constructed with
\[
M_A, M_B \in \mathbb{Z}^{100 \times 2} \text{ sampled from } [-2, 2], \quad
N_A, N_B \in \mathbb{Z}^{2 \times 100} \text{ sampled from } [1, 4].
\]
This ensures that both \( A \) and \( B \) have rank at most 2. If condition~\eqref{ass:A} is satisfied, we obtain a reduced game of size \( 4 \times 4 \), which we solve be the described methods. Figure~\ref{fig:ex03-paper} shows the payoff plot for a concrete sample.
\end{ex}

\begin{figure}
    \centering
    \includegraphics[width=.6\textwidth]{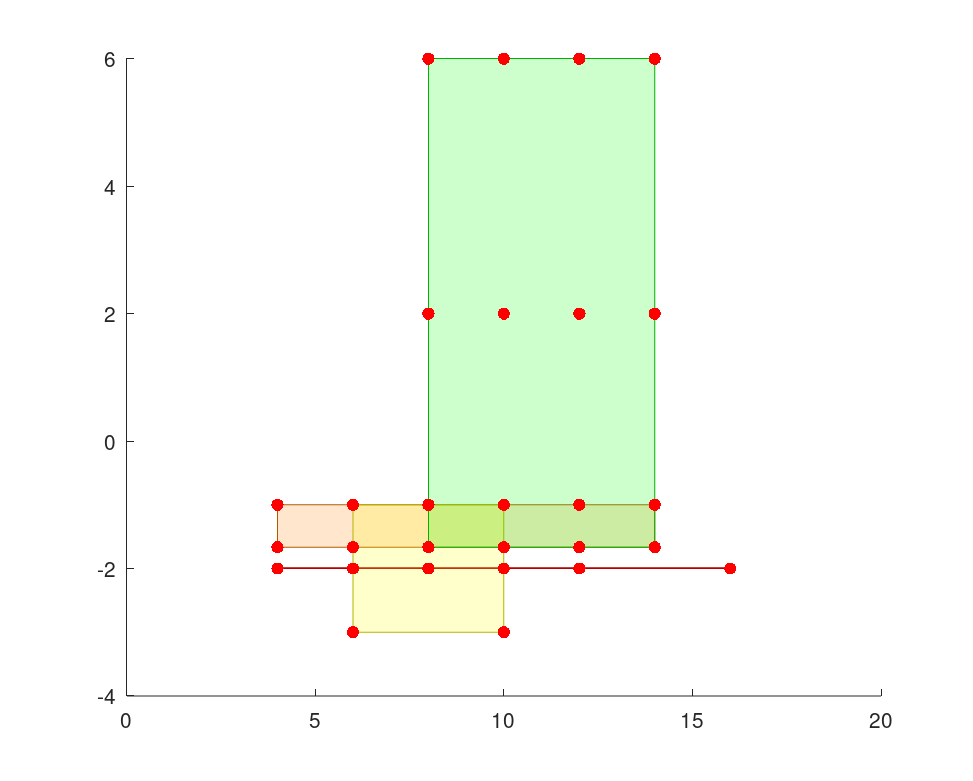}
    \caption{The figure shows the payoff plot for Example \ref{ex:ex03-paper}. There are three rectangles and one line segment. Each of these four objects corresponds to a maximal pair of Nash faces and shows the possible payoffs for both players. The red points indicate the payoffs at extremal Nash equilibria of the reduced game. The original $100 \times 100$ game with $\rank A = \rank B = 2$, which was not computed here, has the same payoff plot, i.e., the same four rectangular regions, but it is allowed to have a different number of extremal equilibria.
}
    \label{fig:ex03-paper}
\end{figure}

\begin{ex}\label{ex:ex04-paper}
Consider an unconstrained bimatrix game defined by the following $8 \times 9$ matrices of rank~$2$:




\[
A =
\begin{pmatrix}
  -3 &  -5 &  -2 &  -2 &  -2 &  -3 &  -4 &  -2 &  -2 \\
  -6 & -10 &  -4 &  -6 &  -8 &  -6 &  -4 &  -6 & -10 \\
  -3 &  -5 &  -2 &  -2 &  -2 &  -3 &  -4 &  -2 &  -2 \\
 \m 0 & \m 0 & \m 0 & \m 1 & \m 2 & \m 0 &  -2 & \m 1 & \m 3 \\
 \m 0 & \m 0 & \m 0 &  -1 &  -2 & \m 0 & \m 2 &  -1 &  -3 \\
  -3 &  -5 &  -2 &  -4 &  -6 &  -3 & \m 0 &  -4 &  -8 \\
 -12 & -20 &  -8 & -10 & -12 & -12 & -12 & -10 & -14 \\
 \m 3 & \m 5 & \m 2 & \m 2 & \m 2 & \m 3 & \m 4 & \m 2 & \m 2
\end{pmatrix}
\]

\[
B =
\begin{pmatrix}
 \m 0 & \m 0 &  -4 & \m 0 & \m 4 & \m 2 & \m 0 &  -4 & \m 4 \\
  -3 &  -3 &  -3 &  -3 & \m 3 & \m 3 & \m 3 &  -3 & \m 3 \\
  -1 &  -1 & \m 7 &  -1 &  -7 &  -3 & \m 1 & \m 7 &  -7 \\
 \m 1 & \m 1 &  -1 & \m 1 & \m 1 & \m 0 &  -1 &  -1 & \m 1 \\
 \m 0 & \m 0 & \m 4 & \m 0 &  -4 &  -2 & \m 0 & \m 4 &  -4 \\
 \m 1 & \m 1 & \m 3 & \m 1 &  -3 &  -2 &  -1 & \m 3 &  -3 \\
  -3 &  -3 &  -1 &  -3 & \m 1 & \m 2 & \m 3 &  -1 & \m 1 \\
  -1 &  -1 & \m 1 &  -1 &  -1 & \m 0 & \m 1 & \m 1 &  -1
\end{pmatrix}.
\]
We solved the original game and the reduced game for $t=1$, obtaining $16$ and $12$ extremal Nash equilibria, respectively. This demonstrates that the number of extremal Nash equilibria can differ. In both cases we computed four maximal Nash faces.
\end{ex}


\begin{ex}
We set \( m = n = 500 \) and \( r_A = r_B = 2 \). The matrices \( A = M_A N_A \) and \( B = M_B N_B \) are constructed with
\[
M_A, M_B \in \mathbb{Z}^{500 \times 2}, N_A, N_B \in \mathbb{Z}^{2 \times 500} \text{ sampled uniformly from } [-3, 3],
\]
We solved 10 random instances on a desktop computer with a 3.6 GHz clock speed. The average running time to solve one reduced game was approximately 10 seconds.
\end{ex}

\bibliography{ref}

\begin{thebibliography}{10}

\bibitem{Avis2000}
D.~Avis.
\newblock lrs: a revised implementation of the reverse search vertex enumeration algorithm.
\newblock In G.~Kalai and G.~M. Ziegler, editors, {\em Polytopes---Combinatorics and Computation}, volume~29 of {\em DMV Seminar}, pages 177--198. Birkhäuser, Basel, 2000.

\bibitem{Avis2006}
D.~Avis.
\newblock {\em User’s Guide for lrs}, 2006.
\newblock Accessed: YYYY-MM-DD.

\bibitem{Avis2009}
D.~Avis, G.~Rosenberg, R.~Savani, and B.~von Stengel.
\newblock Enumeration of nash equilibria for two-player games.
\newblock {\em Economic Theory}, 42:9--37, 2009.

\bibitem{bertsekas}
D.~P. Bertsekas.
\newblock {\em Convex optimization theory}.
\newblock Athena Scientific, Nashua, NH, 2009.

\bibitem{bhattacharjee2000polytope}
R.~Bhattacharjee, F.~Thuijsman, and O.~J. Vrieze.
\newblock Polytope games.
\newblock {\em Journal of Optimization Theory and Applications}, 105(3):567--588, 2000.

\bibitem{bt}
D.~Ciripoi, A.~L\"{o}hne, and B.~Wei{\ss}ing.
\newblock Bensolve tools, version 1.3.
\newblock Calculus of Convex Polyhedra, \url{http://tools.bensolve.org}, (2019).

\bibitem{CirLoeWei19}
D.~Ciripoi, A.~L\"{o}hne, and B.~Wei{\ss}ing.
\newblock Calculus of convex polyhedra and polyhedral convex functions by utilizing a multiple objective linear programming solver.
\newblock {\em Optimization}, 68(10):2039--2054, 2019.

\bibitem{ConstraintGames14}
K.~Firouzbakht, G.~Noubir, and M.~Salehi.
\newblock Packetized wireless communication under jamming, a constrained bimatrix game.
\newblock In {\em 2014 IEEE Global Communications Conference}, pages 740--745, 2014.

\bibitem{HeyLoe08}
F.~Heyde and A.~L\"{o}hne.
\newblock Geometric duality in multiple objective linear programming.
\newblock {\em SIAM J. Optim.}, 19(2):836--845, 2008.

\bibitem{Kakutani41}
S.~Kakutani.
\newblock A generalization of {B}rouwer's fixed point theorem.
\newblock {\em Duke Math. J.}, 8:457--459, 1941.

\bibitem{kannan2010games}
R.~Kannan and T.~Theobald.
\newblock Games of fixed rank: A hierarchy of bimatrix games.
\newblock {\em Economic Theory}, 42:157--173, 2010.

\bibitem{lauritzen}
N.~Lauritzen.
\newblock {\em Undergraduate convexity}.
\newblock World Scientific Publishing Co. Pte. Ltd., Hackensack, NJ, 2013.
\newblock From Fourier and Motzkin to Kuhn and Tucker.

\bibitem{LoeWei16}
A.~L\"{o}hne and B.~Wei{\ss}ing.
\newblock Equivalence between polyhedral projection, multiple objective linear programming and vector linear programming.
\newblock {\em Math. Methods Oper. Res.}, 84(2):411--426, 2016.

\bibitem{bensolve}
A.~L\"{o}hne and B.~Wei{\ss}ing.
\newblock Bensolve, version 2.1.0.
\newblock A Free Vector Linear Program Solver, \url{http://bensolve.org}, (2017).

\bibitem{LoeWei17}
A.~L\"{o}hne and B.~Wei{\ss}ing.
\newblock The vector linear program solver {\it {b}ensolve}---notes on theoretical background.
\newblock {\em European J. Oper. Res.}, 260(3):807--813, 2017.

\bibitem{mangasarian64}
O.~L. Mangasarian.
\newblock Equilibrium points of bimatrix games.
\newblock {\em J. Soc. Indust. Appl. Math.}, 12:778--780, 1964.

\bibitem{MenZha14}
F.~Meng and J.~Zhan.
\newblock Two methods for solving constrained bi-matrix games.
\newblock {\em Open Cybern. Systemics J.}, 8:1038--1041, 2014.

\bibitem{millman74}
C.~B. Millham.
\newblock On nash subsets of bimatrix games.
\newblock {\em Naval Research Logistics Quarterly}, 21(2):307--317, 1974.

\bibitem{nash1950}
J.~Nash.
\newblock Equilibrium points in $n$-person games.
\newblock {\em Proceedings of the National Academy of Sciences}, 36(1):48--49, 1950.

\bibitem{nash1951}
J.~Nash.
\newblock Non-cooperative games.
\newblock {\em Annals of Mathematics}, pages 286--295, 1951.

\bibitem{Nash50}
J.~F. Nash, Jr.
\newblock Equilibrium points in {$n$}-person games.
\newblock {\em Proc. Nat. Acad. Sci. U.S.A.}, 36:48--49, 1950.

\bibitem{neumann1928theorie}
J.~v. Neumann.
\newblock Zur {Theorie der Gesellschaftsspiele}.
\newblock {\em Mathematische {Annalen}}, 100(1):295--320, 1928.

\bibitem{Rockafellar70}
R.~T. Rockafellar.
\newblock {\em Convex analysis}.
\newblock Princeton Mathematical Series, No. 28. Princeton University Press, Princeton, N.J., 1970.

\bibitem{vStengel21}
B.~von Stengel.
\newblock Finding nash equilibria of two-player games.
\newblock {\em CoRR}, abs/2102.04580, 2021.

\end{thebibliography}

\end{document}